\theoremstyle{plain}
\newtheorem{thm}{Theorem}[section]
\newtheorem{prop}[thm]{Proposition}
\newtheorem{lem}[thm]{Lemma}
\theoremstyle{definition}
\newtheorem{ex}[thm]{Example}
\theoremstyle{remark}
\newtheorem{rem}[thm]{Remark}
\numberwithin{equation}{section}
\def\hyp{\mathrm{hyp}}
\def\hull{\mathrm{span}}
\def\fphi{f_{\phi}}
\def\fphihat{\hat{f}_{\phi}}
\def\fphicheck{\check{f}_{\phi}}
\def\ghat{\hat{g}}
\def\gcheck{\check{g}}
\def\fcheck{\check{f}}
\def\one{\mathbf{1}} 
\def\fsp{\cH}
\def\fbsp{\cH_b}
\def\zbar{\bar{z}}
\def\wbar{\bar{w}}
\def\disp{\displaystyle} 
\def\fhat{{\widehat{f}}}
\def\bra{\langle}
\def\ket{\rangle}
\def\as{a.s.}
\def\phi{\varphi} 
\def\conv{\mathrm{conv}}
\def\refl{\mathrm{ref}}
\newcommand{\newoperator}[2]{\DeclareMathOperator{#1}{#2}}
\newoperator{\supp}{supp}
\newoperator{\tr}{Tr}
\newoperator{\re}{Re}
\newoperator{\im}{Im}
\newoperator{\sgn}{sgn}
\newoperator{\pf}{Pf}
\newoperator{\per}{per}
\newoperator{\var}{var}
\def\Z{\mathbb{Z}}
\def\E{\mathbb{E}}
\def\C{\mathbb{C}}
\def\D{\mathbb{D}}
\def\R{\mathbb{R}}
\def\P{\mathbb{P}}
\def\cZ{\mathcal{Z}}
\def\cU{\mathcal{U}}
\def\cV{\mathcal{V}}
\def\cK{\mathcal{K}}
\def\cH{\mathcal{H}}
\def\cI{\mathcal{I}}
\def\cP{\mathcal{P}}
\def\cQ{\mathcal{Q}}
\def\cR{\mathcal{R}}
\def\cS{\mathcal{S}}
\begin{document} 
\title[The density of zeros of random power series]{The
density of zeros of random power series with stationary complex Gaussian coefficients} 
\author[T.~Shirai]{Tomoyuki SHIRAI}
\address{Institute of Mathematics for Industry, Kyushu University, 744 Motooka, Nishi-ku, Fukuoka 819-0001, Japan}
\email{shirai@imi.kyushu-u.ac.jp} 
\subjclass[2020]{30B20; 30C15; 60G15; 60G55} 
\begin{abstract} 
We study the zeros of random power series with
 stationary complex Gaussian coefficients, 
whose spectral measure is absolutely continuous. 
We analyze the precise asymptotic behavior of the radial density of zeros near the boundary of the circle of convergence. The dependence of the coefficients
 generally reduces the density of zeros compared
 with that of the hyperbolic Gaussian analytic function (the
 i.i.d. coefficients case), 
where the spectral density and its zeros plays a crucial
 role in this reduction. 
We also show the relationship between the support of the
 spectral measure and the analytic continuation at the
 boundary of the circle of convergence.
\end{abstract}
\maketitle 

\section{Introduction}  \label{sec:intro}

Gaussian analytic functions have been the subject of 
extensive research from various points of view over the years
(cf. \cite{Kahane85,Sodin00,Sodin-Tsirelson-04,
Peres-Virag05, K09,
HKPV09,N12,NS12,Shirai12,Feldheim13,ANS17,GP17,BNPS18,Feldheim-Dvora18,B21,KN21,BN22,HKR22} and
references therein). 
The case of random power series with i.i.d. coefficients has been 
extensively studied. In this paper, we focus on 
Gaussian power series with dependent coefficients under the
following setting. 
Let $\Xi = \{\xi_k\}_{k \in \Z}$ be a stationary, centered,
complex Gaussian process with covariance  
\[
 \gamma(k-l) = \E[\xi_k \overline{\xi_l}], 
\]
where we always assume that $\gamma(0)=1$, i.e., the
distribution of each marginal $\xi_k$ is 
complex standard normal. 
We consider the Gaussian power series with dependent coefficients $\Xi=\{\xi_k\}_{k \in \Z}$ 
\[
 X_{\Xi}(z) := \sum_{k=0}^{\infty} \xi_k z^k
\]
and study its random zeros. The radius of convergence of
$X_{\Xi}(z)$ is equal to $1$ a.s. 
since $\lim_{k \to \infty} |\xi_k|^{1/k}=1$ a.s. as long as
$\gamma(0) = \E[|\xi_k|^2]=1$ (cf. Lemma~\ref{lem:normal_sequence}.)
The zeros of $X_{\Xi}(z)$ form a point process
$\cZ_{X_{\Xi}} := \sum_{z \in \D : X_{\Xi}(z) =0} \delta_z$ on $\D:=\{z \in \C : |z| < 1\}$. 
When $\gamma(k) = \delta_{k0}$, i.e., $\Xi = \{\xi_k\}_{k
\in \Z}$ is an i.i.d. sequence, the Gaussian analytic
function (GAF) is called the hyperbolic GAF, 
and its zero process is known to be the determinantal
point process associated with the Bergman kernel $K_{\mathrm{Berg}}(z,w) =
(1-z\wbar)^{-2}$ and the background measure $\pi^{-1} m(dz)$
\cite{Peres-Virag05} (see for details on DPPs (cf. \cite{ST00, So00, ST03a, HKPV09}). From this fact, it is easily seen that
the expected number of zeros of the hyperbolic GAF within $\D_r = \{z \in \C : |z|<r\}$ 
is exactly equal to $r^2(1-r^2)^{-1}$. 
In \cite{Mukeru20} the authors treated
the case where the coefficient Gaussian process $\Xi$ corrsponds to a fractional
Gaussian noise with Hurst index $0 \le H < 1$ and gave an
estimate for $\E[\cZ_{X_{\Xi}}(\D_r)]$, showing that it is
small by $O((1-r^2)^{-1/2})$ compared with the hyperbolic GAF case. 
In \cite{Noda-Shirai23}, the case where $\Xi$ is finitely dependent, i.e., 
$\gamma(k)$ is a trigonometric polynomial, was considered, and
the precise asymptotics of the expected number of zeros were
analyzed. It is also shown that the dependence of the
coefficients always reduces the expected number of 
zeros. Furthermore, the spectral density and its zeros of the coefficient Gaussian
process $\Xi$, in particular, the degeneracy of zeros, 
plays a crucial role in determining the extent of this reduction. 
With the results of \cite{Noda-Shirai23} in mind, 
this paper discusses the precise asymptotic behavior of the 
radial density of zeros as it approaches 
the boundary of the circle of convergence, i.e., $\partial \D$. 
Moreover, we discuss the analytic continuation of
$X_{\Xi}(z)$ in relation to the strength of the dependence
among the coefficients in $\Xi$. Intuitively, we will see
that the stronger the dependence of $\Xi$, the easier the analytic continuation
becomes. These observations  
 provide deeper insights into the relationship between the
 spectral measure of the coefficients and the distributions of zeros. 

The rest of the paper is organized as follows: 
In Section~\ref{sec:main}, we present the main results of the paper. 
Section~\ref{sec:kostlan} discusses the spectral representation of the
coefficinet Gaussian process $\Xi$ and provides a spectral representation of the
density of zeros of $X_{\Xi}(z)$. In Section~\ref{sec:poisson_integral}, we refine the asymptotic
behavior of the Poisson integral and its variant near the
boundary of the unit disk. Section~\ref{sec:proof} contains
the proof of the main Theorem~\ref{thm:main}. Finally, in
Section~\ref{sec:analytic_continuation}, we discuss the
analytic continuation of $X_{\Xi}(z)$ in terms of the spectral
measure of $\Xi$.

\section{Main results} \label{sec:main}

The variance of $X_{\Xi}(z)$ in terms of
the spectral measure $F(dt)$ of the coefficient Gaussian
process $\Xi$ is given by 
\[
K(z,z) := \E[|X_{\Xi}(z)|^2] 
= \frac{1}{1-r^2} \int_{-\pi}^{\pi} P_r(\phi-t)
F(dt),  
\] 
where $z = r e^{i\phi}$ and $P_r(s)$ is the Poisson kernel 
\[
P_r(s) = \frac{1-r^2}{1-2r \cos s + r^2} \quad \text{for $s
\in [-\pi,\pi]$ and $0 \le r < 1$}. 
\]
The covariance kernel of $X_{\Xi}(z)$ will be given in \eqref{eq:Kzw}. 
We often identified $e^{i\phi}$ with $\phi \in [-\pi,\pi]$ and 
we write $f(\phi)$ for $f(e^{i\phi})$. 
If $F(dt)$ is absolutely continuous, i.e., $F(dt) = f(t)dt$,
then we have the following representation: 
\[
 K(z,z) = \frac{1}{1-|z|^2} \E_z[f(B_{\tau})], 
\]
where $B_t$ is the complex Brownian motion starting at $z
\in \D$ and $\tau$ is the first hitting time to the boundary
$\partial \D$. Thus, as $z$ approaches the boundary $e^{i \phi}$, 
i.e., $|z|$ approaches $1$, the variance $K(z,z)$ diverges and $X_{\Xi}(z)$
oscillates very fast (and it tends to have more zeros) unless $f(e^{i\phi})$ vanishes. 
For example, it is well-known that a Gaussian process $\Xi = \{\xi_k\}_{k
\in \Z}$ is \textit{purely non-deterministic} if and only if its
spectral measure $F(ds)$ is absolutely continuous, i.e.,
$F(ds) = f(s) ds$ such that 
$\int_{-\pi}^{\pi} \log f(s) ds > -\infty$ \cite[p.112]{Ibragimov-Rozanov78}. 
In this case, $f(s) > 0$ a.e. on $\partial \D$, and hence
$X_{\Xi}(z)$ oscillates infinitely often near the boundary
$\partial \D$, and the boundary $\partial \D$ becomes the
natural boundary (see Section~\ref{sec:analytic_continuation}). 

In order to state the main result, 
we introduce some notations. 
The symmetrization and anti-symmetrization of $h$
are defined by 
\begin{equation}
 \hat{h}(s) = \frac{h(s) + h(-s)}{2}, \quad 
 \check{h}(s) = \frac{h(s) - h(-s)}{2}. 
\label{eq:sym-antisym} 
\end{equation}
We define the operator $T$ acting on symmetric functions smooth at
$s=0$ by 
\[
Th(s) := 
\begin{cases}
\disp \frac{h(s) - h(0)}{1-\cos s} & \text{for $s \not= 0$}, \\[3mm]
h''(0) & \text{for $s=0$}. 
\end{cases}
\]
and $\cI$ is the integral operator defined by 
\[
 \cI(h) = \frac{1}{2\pi} \int_{-\pi}^{\pi} h(s) ds. 
\]

The following is the first main theorem of this paper. 

\begin{thm}\label{thm:main}
Let $F_{\phi}(ds) = F(d(s+\phi))$ on $(-\pi, \pi]$ modulo $2\pi$. 
Suppose $F_{\phi}(ds) =  f_{\phi}(s)ds$ and $f_{\phi}(s)$ is smooth at $s=0$. \\
\noindent
(i) When $f_{\phi}(0) > 0$, 
\begin{align*}
  \rho_1(re^{i\phi})
&=\frac{1}{\pi(1-r^2)^2} 
- \frac{1}{4f_{\phi}(0)^2} \Big\{ f_{\phi}'(0)^2 + \cI(T \fphihat)^2 \Big\}
+ O(1-r^2). 
\end{align*}
(ii) When $f_{\phi}(0) = 0$, 
\begin{align*}
  \rho_1(re^{i\phi})
&=\frac{1}{\pi(1-r^2)} 
\frac{f_{\phi}''(0)}{2\cI(T\fphihat)} + O(1)
\end{align*}
(iii) When $\fphi(0)=\fphi''(0)=0$. Then, 
\begin{align*}
\rho_1(re^{i\phi})
= \frac{1}{\pi} \frac{\cI(T^2 \fphihat)^2 -
 \cI(T^2(\fphihat \cos s))^2 
- \cI\big(T^2(\fphicheck \sin s)\big)^2} 
{4 \cI(T\fphihat)^2} + O(1-r^2). 
\end{align*}
\end{thm}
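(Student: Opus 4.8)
The plan is to start from the Edelman--Kostlan representation of the first intensity established in Section~\ref{sec:kostlan}, namely $\rho_1(z) = \frac{1}{4\pi}\Delta_z \log K(z,z)$, and to exploit the factorization $K(z,z) = (1-r^2)^{-1} u(r,\phi)$ with $u(r,\phi) = \int_{-\pi}^{\pi} P_r(s) \fphi(s)\,ds$. The decisive structural observation is that $u$ equals $2\pi$ times the Poisson integral of $f$, hence is \emph{harmonic} in $z=re^{i\phi}$, so that $\Delta \log u = -|\nabla u|^2/u^2$. Splitting $\log K = -\log(1-r^2) + \log u$ therefore gives
\begin{equation*}
\rho_1(re^{i\phi}) = \frac{1}{\pi(1-r^2)^2} - \frac{1}{4\pi}\,\frac{|\nabla u|^2}{u^2}, \qquad |\nabla u|^2 = (\partial_r u)^2 + \frac{1}{r^2}(\partial_\phi u)^2,
\end{equation*}
in which the first term is exactly the hyperbolic GAF density. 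Thus everything reduces to the boundary asymptotics of $u$ and its first derivatives as $r\uparrow 1$.

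First I would record the basic expansions coming from Section~\ref{sec:poisson_integral}. Since $P_r$ is even, only the symmetric part survives in $u$, and writing $\fphihat(s) = \fphi(0) + (1-\cos s)\,T\fphihat(s)$ peels off the boundary value, so that $u = 2\pi \fphi(0) + (1-r^2)\int_{-\pi}^\pi Q_r(s)\,T\fphihat(s)\,ds$ with $Q_r(s) = (1-\cos s)/(1-2r\cos s + r^2)\to \tfrac12$. This yields the limits $u\to 2\pi \fphi(0)$, $\partial_r u \to -2\pi\,\cI(T\fphihat)$ and $\partial_\phi u \to 2\pi \fphi'(0)$, the radial direction being the delicate one because the relevant kernels concentrate near $s=0$ on the scale $1-r$.

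Next I would carry out the case analysis. In case (i), $\fphi(0)>0$ and one simply inserts the three boundary values into $|\nabla u|^2/u^2$, producing the $\fphi'(0)^2$ (angular) and $\cI(T\fphihat)^2$ (radial) contributions. In case (ii), $\fphi(0)=0$, so I factor $u = (1-r^2)w$ with $w=\int Q_r\,T\fphihat\,ds \to \pi\,\cI(T\fphihat)\neq 0$; expanding $|\nabla u|^2/u^2$, the $(1-r^2)^{-2}$ term cancels the hyperbolic term and the surviving $(1-r^2)^{-1}$ coefficient is governed by the refined boundary limit $\partial_r w \to -\pi\,\cI(T\fphihat) + \tfrac{\pi}{2}\fphi''(0)$, where $\fphi''(0) = T\fphihat(0)$ is produced entirely by the boundary layer. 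In case (iii) one has $\fphi''(0)=T\fphihat(0)=0$, so $T\fphihat$ vanishes at the origin and I factor once more, $T\fphihat = (1-\cos s)\,T^2\fphihat$, which brings in the iterate $T^2$; the $(1-r^2)^{-1}$ term then cancels as well, and the finite limit is assembled from the second-order radial and angular terms. Re-expressing the concentrated kernels through $T^2$ and the elementary identities $\cos s = 1-(1-\cos s)$ and $\sin s = \partial_s(1-\cos s)$ — the antisymmetric part entering the angular term via $\partial_\phi \fphihat = (\fphicheck)'$ — produces exactly the three functionals $\cI(T^2\fphihat)$, $\cI(T^2(\fphihat\cos s))$ and $\cI(T^2(\fphicheck\sin s))$, giving the stated quotient with denominator $4\,\cI(T\fphihat)^2$.

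The hard part will be the refined boundary-layer analysis underlying these expansions: the kernels $\partial_r Q_r$, $\partial_r^2 Q_r$ (and their $T^2$-weighted variants) concentrate at $s=0$ on the scale $1-r$, and one must extract their finite and divergent contributions, each governed by the value at $s=0$ of the relevant weight, while controlling the remainders uniformly. This is most acute in case (iii), where a \emph{double} cancellation occurs: the leading term $-\pi\,\cI(T\fphihat)$ of $\partial_r w$ enters several of the $O(1)$ pieces and must cancel before the genuine finite limit appears, so the bookkeeping of two successive orders — together with the justification of interchanging the limit $r\uparrow 1$ with the integration — is the technical crux. These are precisely the estimates provided in Section~\ref{sec:poisson_integral}.
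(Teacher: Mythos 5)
Your decomposition is correct, and it is a genuinely different organization from the paper's proof. The paper never isolates the hyperbolic term exactly: it computes $\partial_z\partial_{\bar z}\log K$ from the spectral double integral (Proposition~\ref{prop:1-intensity}) and arrives at the exact quotient
\begin{equation*}
\rho_1(re^{i\phi}) \;=\; \frac{1}{\pi(1-r^2)^2}\,
\frac{\cQ_r(\fphihat)^2 - \cQ_r(\fphihat\cos s)^2 - \cQ_r(\fphicheck\sin s)^2}{\cP_r(\fphihat)^2},
\end{equation*}
into which it feeds the expansions of $\cP_r$ and $\cQ_r$ (Propositions~\ref{prop:asymptotics1} and~\ref{prop:asymptotics2}); all cancellations happen inside this quotient. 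Your splitting $\log K = -\log(1-r^2)+\log u$ with $\Delta\log u = -|\nabla u|^2/u^2$ is exact and buys two things the paper's route leaves invisible: the hyperbolic density appears identically rather than as the leading term of an expansion, and the correction $-\frac{1}{4\pi}|\nabla u|^2/u^2$ is manifestly nonpositive, so the bound $\rho_1\le \frac{1}{\pi(1-r^2)^2}$ (dependence reduces zeros) comes for free. I checked your boundary limits and all three case computations against the paper's coefficients and they agree, including the two-orders-deep cancellation in case (iii). (One remark on the statement itself: your route gives the case (i) constant as $-\frac{1}{4\pi\fphi(0)^2}\{\fphi'(0)^2+\cI(T\fphihat)^2\}$, with a prefactor $1/\pi$; this is also what the paper's own proof produces, so the displayed theorem appears to have dropped a $\pi$ there, and your version is the consistent one.)

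The one genuine inaccuracy is your closing claim that the estimates you need ``are precisely the estimates provided in Section~\ref{sec:poisson_integral}.'' They are not: Propositions~\ref{prop:asymptotics1} and~\ref{prop:asymptotics2} expand $\cP_r(h)$ and $\cQ_r(h)$, i.e., integrals of $h$ against $P_r$ and $P_r^2$, whereas you need boundary expansions of $\partial_r u$ and $\partial_\phi u$, i.e., integrals against $\partial_r P_r$ and $\partial_s P_r$. You cannot get these by differentiating the expansion of $u$ term by term: asymptotic expansions are not differentiable in general, and this is exactly where your ``interchange of limit and integration'' worry lives. The cleanest repair is the pointwise kernel identities
\begin{equation*}
\partial_r P_r(s) = \frac{2(1+r^2)\cos s - 4r}{(1-r^2)^2}\,P_r(s)^2,
\qquad
\partial_s P_r(s) = -\frac{2r\sin s}{1-r^2}\,P_r(s)^2,
\end{equation*}
which yield exactly $\partial_r u = \frac{2\pi}{(1-r^2)^2}\{2(1+r^2)\cQ_r(\fphihat\cos s)-4r\cQ_r(\fphihat)\}$ and $\partial_\phi u = \frac{4\pi r}{1-r^2}\cQ_r(\fphicheck\sin s)$, so that Proposition~\ref{prop:asymptotics2} supplies every expansion you need (alternatively, differentiate the exact recursion of Lemma~\ref{lem:Prrecursive} rather than the expansion). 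Be aware, though, that with these substitutions, together with $\cP_r(h)=(1-r^2)^{-1}\{(1+r^2)\cQ_r(h)-2r\cQ_r(h\cos s)\}$, your formula becomes algebraically identical to the paper's quotient displayed above; so once the gap is filled this way, your argument is a transparent reorganization of the same $\cQ_r$-asymptotics rather than an independent estimate.
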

When the spectral density vanishes at some points on the unit circle, 
it is observed that the order of the density of zeros in the
direction approaching those points decreases according to
the degree of the degeneracy. 
This is related to the phenomenon analyzed in
\cite{Noda-Shirai23}, where the expected number of zeros is
further reduced when the spectral density has zeros. 
The opposite effect was observed in \cite{APP22}, where the expected number of real zeros of 
Gaussian trigonometric polynomials increases due to the
degeneracy of the spectral measure.  

\begin{rem}\label{rem:integral}
Since $\fphihat(s)$ is of $O(s^2)$ at $s=0$ when $\fphi(0)=0$, we have 
\[
 \cI(T\fphihat) 
= \frac{1}{2\pi}\int_{-\pi}^{\pi}
 \frac{\fphihat(s)}{1-\cos s} ds. 
\]
Note that $\fphihat(s), \fphihat(s) \cos s, \fphicheck(s) \sin s$ are
 of $O(s^4)$ when $\fphi(0)=\fphi''(0)=0$. 
For such a function $g$ of $O(s^4)$ at $s=0$, we have 
\[
\cI(T^2 g) 
= \frac{1}{2\pi}\int_{-\pi}^{\pi} \frac{g(s)}{(1-\cos s)^2} ds. 
\]
Then, the numerator of the first term in (iii) can be expressed as 
\[
 \frac{1}{4} \int_{-\pi}^{\pi}\int_{-\pi}^{\pi} 
\frac{G_{\phi}(s,t) + G_{\phi}(s,-t)}{(1-\cos s)^2 (1-\cos
 t)^2} dsdt, 
\]
where 
$G_{\phi}(s,t) :=
 \{f_{\phi}(s)f_{\phi}(t)+f_{\phi}(-s)f_{\phi}(-t)\}\{1-\cos(s-t)\}$,
 and it is obviously positive. 
\end{rem}

\begin{ex}[$1$-dependent case]
We consider the case where the coefficient Gaussian process $\Xi$ is
 $1$-dependent, i.e., the covariance function is given by 
\[
 \gamma(k) = 
\begin{cases}
 1 & k=0, \\
 a & |k|= 1, \\
 0 & |k| \ge 2 
\end{cases}
\]
with $|a| \le 1/2$.  
In this case, 
$f_{\phi}(s) = 1 + 2a \cos (s+\phi)$. 
Then, 
\[
\fphi(0) = 1+2a \cos \phi, \ 
\fphi'(0) = -2a \sin \phi, \ 
\fphi''(0) = -2a \cos \phi,  
\]
and 
\[
 \fphihat(s) = 1 + 2a \cos \phi \cos s, \quad  
T \fphihat(s) 
=\frac{\fphihat(s) - \fphihat(0)}{1-\cos s} 
= -2a \cos \phi. 
\]
Note that $\fphi(0)=0$ only if 
$(a,\phi)=(1/2, \pi)$ or $(a,\phi)=(-1/2, 0)$. \\
\noindent 
(i) When $|a| \le 1/2$ and $(a,\phi)\not= (1/2, \pi), (-1/2, 0)$, 
since $\cI(T \fphihat) = -2a \cos \phi$, we have 
\[
R(f_{\phi}) 
:= \frac{1}{4 f_{\phi}(0)^2} 
\Big(f_{\phi}'(0)^2 + \cI(T\fphihat)^2 \Big)
= \frac{a^2}{(1+2a \cos \phi)^2},  
\]
and then   
\[
 \rho_1(re^{i\phi}) = \frac{1}{\pi(1-r^2)^2} - 
\frac{a^2}{(1+2a \cos \phi)^2} 
+ O(1-r^2). 
\]
(ii) When $(a,\phi)=(1/2, \pi)$ or $(a,\phi)=(-1/2, 0)$, we
 have 
\[
 \rho_1(re^{i\phi}) 
= \frac{1}{\pi(1-r^2)} \frac{f_{\phi}''(0)}{2\cI(T\fphihat)} + O(1)
= \frac{1}{2\pi(1-r^2)} + O(1). 
\]
\end{ex}

\begin{ex} \label{ex:half-interval}
We consider the case $F(ds) = I_{[-\pi/2,\pi/2]}(s)ds$ and
 let $\{\xi_k\}_{k \in \Z}$ be the corresponding Gaussian process. 
The right panel of Fig.~\ref{fig:fig1} shows the zeros of
 the approximate polynomial of $X_F(z) = \sum_{k=0}^{\infty}
 \xi_k z^k$. The zeros in the right half are distributed
 almost the same as in the left panel, while the zeros in
 the left half are neatly aligned near the unit
 circle. These correspond to the zeros that might  disappear
 in the limit as $N \to \infty$, which is  consistent with
 the fact that the density of the zeros on  the left is $O(1)$.

\begin{figure}[htbp]
\begin{center}
\includegraphics[width=0.9 \hsize]{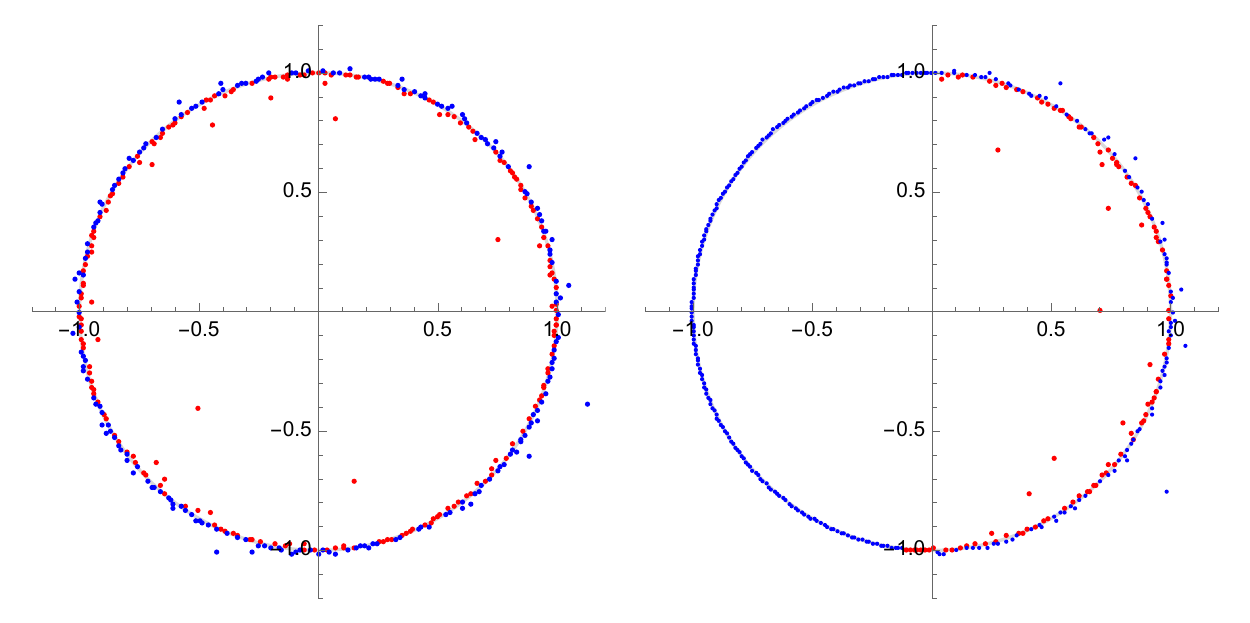}
\end{center}
\caption{Left: The zeros of the approximate polynomial $X_{\hyp}^{(400)}(z) =
 \sum_{k=0}^{400} \xi_k z^k$ for $F(dt) = dt/(2 \pi)$,
 the i.i.d. case. Red points indicate zeros
 inside the unit disk, while blue points indicate zeros
 outside the unit disk. 
Right: The zeros of an approximate polynomial of $X_F(z)$ for
 $F(dt)=\one_{[-\pi/2,\pi/2]}(t)dt/\pi$.} 
\label{fig:fig1}
\end{figure}
Suppose $\pi/2 < \phi < \pi$ and 
\[
 f_{\phi}(s) 
=
\begin{cases}
1 & -\pi < s < -(\phi -\pi/2), \\ 
0 & -(\phi -\pi/2) < s < 3\pi/2-\phi, \\ 
1 & 3\pi/2-\phi < s < \pi. 
\end{cases}
\]
Then, we see that 
\[
 \fhat_{\phi}(s) 
=
\begin{cases}
0 & |s| < \phi - \pi/2, \\ 
1/2 & \phi-\pi/2 < |s| < 3\pi/2-\phi, \\ 
1 & 3\pi/2-\phi < |s| < \pi,  
\end{cases}
\]
and 
\[
 \check{f}_{\phi}(s) 
=
\begin{cases}
0 & |s| < \phi - \pi/2, \\ 
-\sgn(s)/2 & \phi - \pi/2 < |s| < 3\pi/2-\phi, \\ 
0 & 3\pi/2-\phi < |s| < \pi.  
\end{cases}
\]
\begin{figure}[htbp]
\begin{center}
\includegraphics[width=1.0\hsize]{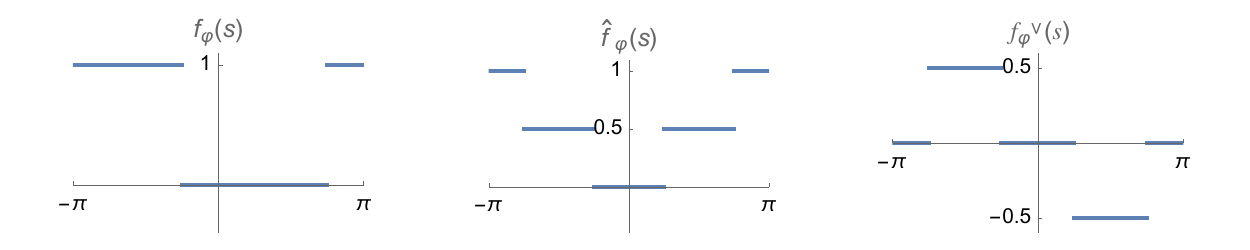}
\end{center}
\caption{The case $f(s) = I_{[-\pi/2,\pi/2]}(s)$ and
 $\phi=3\pi/4$. From left, $f_{\phi}(s)$, $\fhat_{\phi}(s)$,
 and $\fcheck_{\phi}(s)$. }
\end{figure}

In this case, $f_{\phi}(0) = f''_{\phi}(0)=0$. 
After some direct calculations for (iii) in
 Theorem~\ref{thm:main} using Remark~\ref{rem:integral}, we
 obtain the following: as $r \to 1$,  
\[
 \rho_1(re^{i\phi}) = \frac{1}{12\pi \cos^2 \phi} +
 O(1-r^2) \quad \phi \in (\pi/2, \pi]. 
\]
This shows the behavior of zeros on the left-half plane. 
\end{ex}

Finally, we discuss analytic continuation of $X_{\Xi}$. 
For a power series $h=h(z)$, a point $r e^{i \theta}$ on the
circle of convergence of $h$ is \textit{singular} if there does not exist any analytic
continuation of $h$ across an arc of $\{|z|=r\}$
containing $r e^{i\theta}$. A point on $\{|z|=r\}$ is called
\textit{regular} if it is not singular. We call the totally
of the regular points the \textit{regular set} of
$X_{\Xi}$. 
Suppose $F$ is absolutely continuous with $F(dt) = f(t) dt$. 
From Theorem~\ref{thm:main} (iii), if $f(s)=0$ on a closed
arc $I$, then the number of zeros of $X_{\Xi}$ near $I$ is of $O(1)$. This suggests
that $X_{\Xi}(z)$ could be analytically continued across
$I$. This is indeed the case, and the support of the spectral
measure remains crucial even when it is not absolutely continuous. 

\begin{thm}\label{thm:analytic_continuation}
Let $\supp F$ be the support of the spectral measure $F(dt)$
 of $\Xi$. 
Then $(\supp F)^c$ is the regular set of $X_{\Xi}$, i.e., 
$X_{\Xi}(z)$ can only be analytically extended outside the
 disk of convergence across $(\supp F)^c$. 
\end{thm}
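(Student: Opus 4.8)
The plan is to pass to the spectral (Karhunen--Loève) representation of $\Xi$ and to read off both inclusions from the location of the poles of a single Cauchy-type integrand. Writing the spectral random measure of $\Xi$ as $dZ(t)$, a complex Gaussian orthogonal measure on $(-\pi,\pi]$ with control measure $\E|dZ(t)|^2=F(dt)$ and $\xi_k=\int_{-\pi}^{\pi}e^{-ikt}\,dZ(t)$, an $L^2$ interchange of sum and integral gives, for $z\in\D$,
\[
X_{\Xi}(z)=\int_{-\pi}^{\pi}\frac{dZ(t)}{1-z e^{-it}},
\]
which is consistent with $K(z,z)=\frac{1}{1-r^2}\int P_r(\phi-t)F(dt)$. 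For each fixed $t$ the integrand is holomorphic in $z$ except for a pole at $z=e^{it}$, so the only candidate boundary singularities sit on $\{e^{it}:t\in\supp F\}$; this already organizes the proof into the two inclusions.

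For the inclusion $(\supp F)^c\subseteq\{\text{regular points}\}$, fix $\phi_0\notin\supp F$, put $z_0=e^{i\phi_0}$ and $\delta=\operatorname{dist}(z_0,\{e^{it}:t\in\supp F\})>0$. Expanding $\frac{1}{1-ze^{-it}}=\sum_{n\ge0}c_n(t)(z-z_0)^n$ with $c_n(t)=e^{-int}(1-z_0e^{-it})^{-(n+1)}$, the series converges for $|z-z_0|<|1-z_0e^{-it}|=|e^{it}-z_0|\ge\delta$ uniformly in $t\in\supp F$. Setting $a_n=\int c_n(t)\,dZ(t)$, which are jointly Gaussian, one has $\E|a_n|^2=\int|c_n|^2\,dF\le\delta^{-2(n+1)}$, so a Gaussian tail estimate and Borel--Cantelli give $\limsup_n|a_n|^{1/n}\le\delta^{-1}$ a.s.; hence $\sum_n a_n(z-z_0)^n$ is a.s.\ holomorphic on $\{|z-z_0|<\delta\}$ and agrees with $X_{\Xi}$ on the intersection with $\D$. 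Patching such local continuations over a countable dense set of base points $\phi_0\in(\supp F)^c$ produces a single full-probability event on which every point of $(\supp F)^c$ is \emph{regular}.

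The reverse inclusion $\supp F\subseteq\{\text{singular points}\}$ is the hard part, and I would attack it by combining a zero--one law, a localization, and an accumulation-of-zeros mechanism. First, the event $\{e^{i\phi_0}\text{ is regular}\}$ is a countable union of events ``$X_{\Xi}$ extends to a fixed neighborhood $V$'', each of which is a measurable linear subspace for the Gaussian element $X_{\Xi}\in\mathcal{O}(\D)$; by the Gaussian zero--one law it has probability $0$ or $1$, so it suffices to rule out a.s.\ continuation. Splitting $Z=Z_I+Z_{I^c}$ over a small arc $I\ni\phi_0$, the part $Z_{I^c}$ has spectral support away from $\phi_0$ and, by the regular-direction argument, contributes a function holomorphic near $e^{i\phi_0}$; hence regularity of $X_{\Xi}$ at $e^{i\phi_0}$ is equivalent to that of the local field $X_I=\int_I(1-ze^{-it})^{-1}\,dZ$. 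I would then show that for $\phi_0\in\supp F$ the zeros of $X_{\Xi}$ a.s.\ accumulate at $e^{i\phi_0}$, which forbids continuation, a nonzero holomorphic function on a neighborhood having only isolated zeros. Accumulation is driven by the first intensity $\rho_1=\frac{1}{\pi}\partial_z\partial_{\bar z}\log K(z,z)$: where the spectral density is positive, Theorem~\ref{thm:main}(i) gives $\rho_1(re^{i\phi})\sim\frac{1}{\pi(1-r^2)^2}$, a diverging cloud of zeros clustering at $e^{i\phi}$, and such directions $\phi$ accumulate to any $\phi_0\in\supp F$ carrying absolutely continuous mass nearby; atoms of $F$ produce genuine poles of $X_{\Xi}$ that already cluster at $e^{i\phi_0}$.

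The main obstacle is to make the accumulation almost sure and uniform over \emph{all} of $\supp F$, including points where $F$ is purely singular-continuous near $\phi_0$, where neither Theorem~\ref{thm:main} nor poles apply and where $K(z,z)$ may even stay bounded. Here I would exploit the independence of $dZ$ over disjoint arcs: choose disjoint $I_n\subset(\phi_0-1/n,\phi_0+1/n)$ with $F(I_n)>0$ and show that the independent local fields $X_{I_n}$ force $X_{\Xi}$ to vanish in a region $U_n\to e^{i\phi_0}$ with probability bounded below, after which a conditional Borel--Cantelli (Kochen--Stone) step yields a.s.\ infinitely many occupied $U_n$. Equivalently, via the Cameron--Martin description $H_K=\{\int\psi(t)(1-ze^{-it})^{-1}F(dt):\psi\in L^2(F)\}$ attached to the kernel $K(z,w)=\int\frac{F(dt)}{(1-ze^{-it})(1-\bar w e^{it})}$, one reduces to exhibiting a single deterministic $\psi\in L^2(F)$ whose Cauchy transform is non-continuable at $e^{i\phi_0}$. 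The difficulty there is a genuine no-cancellation statement: for smooth $F$ the ``mean'' transform (e.g.\ $\psi\equiv1$, which is constant) can be perfectly regular, so one must show that generic $\psi$, or the full randomness of $dZ$, destroys analyticity at $e^{i\phi_0}$. Establishing this quantitative non-cancellation is the step I expect to require the most care.
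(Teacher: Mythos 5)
Your first half---regularity of every point of $(\supp F)^c$---is correct, and it is essentially the paper's own argument in slightly different coordinates: the paper Taylor-expands $X_{\Xi}$ at an interior point $r\in(0,1)$, computes $\var(\alpha_k)=\int_{-\pi}^{\pi}(1-2r\cos t+r^2)^{-(k+1)}F(dt)$ for $\alpha_k=\frac{1}{k!}X_{\Xi}^{(k)}(r)$, and bounds this above using the distance from $\supp F$ to the arc, while you expand directly at the boundary point $z_0=e^{i\phi_0}$; both versions rest on the same Gaussian tail bound plus Borel--Cantelli and need no independence of the coefficients.

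The genuine gap is in the converse inclusion $\supp F\subseteq\{\text{singular points}\}$. Your proposed mechanism (zero--one law, localization over arcs, a.s.\ accumulation of zeros) is both harder than necessary and, as you yourself concede, incomplete: Theorem~\ref{thm:main} controls only the \emph{expected} intensity, only where $F_{\phi}$ has a smooth density, and you have no argument at points of $\supp F$ carrying purely singular-continuous mass---the ``quantitative non-cancellation'' you defer is precisely the unproved step. What you are missing is that the estimate you already exploited in the regular direction is \emph{two-sided} for Gaussians: besides $\P(|\zeta_n|\ge\sqrt{2\log n})\le n^{-2}$ one also has $\P(|\zeta_n|<1/n)=1-e^{-1/n^2}\le n^{-2}$, and both Borel--Cantelli applications are valid without any independence (this is the paper's Lemma~\ref{lem:normal_sequence}). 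Consequently, writing $\alpha_k=\var(\alpha_k)^{1/2}\zeta_k$ with $\zeta_k$ standard complex normal, one gets almost surely $\lim_k|\zeta_k|^{1/k}=1$, so the radius of convergence of the Taylor series of $X_{\Xi}$ at $r$ is \emph{exactly} $\rho(r)=\limsup_k\var(\alpha_k)^{-1/2k}$; a lower bound on the variance now yields an upper bound on $\rho(r)$. If $e^{i\phi_0}\in\supp F$ (say $\phi_0=0$ after rotation), then $F([-\epsilon,\epsilon])>0$ for every $\epsilon>0$, hence
\begin{equation*}
\var(\alpha_k)\ \ge\ F([-\epsilon,\epsilon])\,(1-2r\cos\epsilon+r^2)^{-(k+1)},
\qquad\text{so}\qquad
\rho(r)\ \le\ (1-2r\cos\epsilon+r^2)^{1/2},
\end{equation*}
and letting $\epsilon\downarrow0$ gives $\rho(r)\le 1-r$ for every $r\in(0,1)$. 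Thus the Taylor disks centered on the ray never cross $z=1$, which forces $z=1$ to be singular (a regular point would make $\rho(r)>1-r$ for $r$ near $1$). This single estimate treats atomic, absolutely continuous, and singular-continuous spectral mass uniformly---the only property of $\supp F$ used is that arbitrarily small neighborhoods of the point carry positive $F$-mass---and it replaces your zero--one law, localization, and zero-accumulation machinery entirely.
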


Conceptually, stronger dependence of $\Xi$ makes analytic continuation more feasible.
The dependence of $\Xi$ is closely related to the spectral
 measure and its degeneracy. 
(i) If $\Xi$ is purely non-deterministic (see Introduction for
 the condition), then 
the circle of convergence is the natural boundary for $X_{\Xi}$.  
(ii) In Example~\ref{ex:half-interval}, 
$X_{\Xi}(z)$ can be analytically continued 
across the unit circle in the left-half plane. 
(iii) If $\Xi = (\xi_k)_{k \in \Z}$ is periodic in $k$ with
period $q$, the spectral measure is atomic. In this case,
$X_{\Xi}$ has the meromorphic extention with poles at 
$\{e^{2\pi i k/q}\}_{k=0,1,\dots,q-1}$. 
These examples are discussed in \cite{Shirai25a+}. 

In Section~\ref{sec:analytic_continuation}, we will provide a direct proof of 
the fact that the singularity of $X_{\Xi}$ coincides with
$\supp F$ (Theorem~\ref{thm:analytic_continuation}). 
Additionally, we will present an alternative proof of this result, 
which has been essentially given in \cite{ANS17} using the theory of entire
functions. This approach was pointed out by Misha Sodin. 

\section{Edelman-Kosltan's formula and spectral measures}
\label{sec:kostlan}

We consider a centered, stationary Gaussian process $\Xi=\{\xi_k\}_{k \in \Z}$ with covariance function 
$\{\gamma(k)\}_{k \in \Z}$. 
Suppose $\gamma(0)=1$. Then, by Herglotz's theorem, there exists a probability measure $F(dt)$ on 
$(-\pi, \pi]$ such that 
\[
 \gamma(k) = \int_{-\pi}^{\pi} e^{-ikt} F(dt). 
\]
The measure $F(dt)$ is called the spectral measure of Gaussian
process $\Xi$. The Gaussian process $\Xi$ with spectral measure $F(dt)$ admits
the following spectral representation 
(cf. \cite{Ibragimov-Rozanov78}): 
\begin{equation}
 \xi_k = \int_{-\pi}^{\pi} e^{-ikt} dZ(t), 
\label{eq:spec-rep-process}
\end{equation}
where $Z(t)$ is the complex Gaussian process with independent increments satisfying 
\[
 \E[Z(A) \overline{Z(B)}] = F(A \cap B). 
\]
Every element $\eta$ of the span $H(\Xi) := \hull \{\xi_k, k \in \Z\} \subset L^2(\P)$ of $\Xi = \{\xi_k\}_{k
\in \Z}$ is represented as 
\[
 \eta = \int_{-\pi}^{\pi} u(t) dZ(t). 
\]
for some $u \in L^2(F)$. Here $L^2(F)$ is the space of
square-integrable functions with respect to the inner
product 
\[
 \bra f, g \ket_F := \int_{-\pi}^{\pi} f(t) \overline{g(t)} F(dt). 
\]
The correspondence between $\eta \in H(\Xi)$ and $u \in
L^2(F)$ is a unitary isomorphism. 

From \eqref{eq:spec-rep-process}, we see that 
\begin{equation}
 X(z) = \sum_{k=0}^{\infty} 
\left(\int_{-\pi}^{\pi} e^{-ikt} dZ(t)\right) z^k
= \int_{-\pi}^{\pi} \frac{1}{1-ze^{-it}} dZ(t). 
\label{eq:spectral_representation} 
\end{equation}
We have an integral reprsentation of the covariance kernel of the GAF $X_{\Xi}$ from
this expression as follows: 
\begin{equation}
 K(z,w) = \int_{-\pi}^{\pi} \frac{1}{1-z e^{-it}}
\overline{\frac{1}{1-we^{-it}}}F(dt)
\label{eq:Kzw} 
\end{equation}
In particular, for $z= re^{i\phi}$, 
\begin{equation}
 K(z,z) 
= \int_{-\pi}^{\pi} 
\frac{1}{|1-z e^{-it}|^2}F(dt)
=\frac{1}{1-r^2} \tilde{F}(re^{i\phi}) 
\label{eq:Kzz} 
\end{equation}
where $\tilde{F}$ is the harmonic extension of $F(dt)$
defined as the Poisson integral 
\[
\tilde{F}(r e^{i\phi}) 
= \int_{-\pi}^{\pi} P_r(\phi-t) F(dt)
= \int_{-\pi}^{\pi} P_r(s) F_{\phi}(ds)
\]
with $F_{\phi}(ds) = F(d(s+\phi))$ on $(-\pi, \pi]$ modulo $2\pi$. 
Here we give a spectral representation of the $1$-intensity $\rho_1(z)$. 
\begin{prop}\label{prop:1-intensity} 
The $1$-intensity of zeros of $X_{\Xi}(z)$ is given by the formula 
\begin{align}
\disp
\rho_1(z)
&= \frac{1}{\pi(1-r^2)^2}
\frac{\int_{-\pi}^{\pi}\int_{-\pi}^{\pi} \{1 - \cos (t-s)\}
P_r(s)^2P_r(t)^2 F_{\phi}(ds)F_{\phi}(dt)}{\Big( \int_{-\pi}^{\pi} 
P_r(s) F_{\phi}(ds)\Big)^2}, \nonumber 
\end{align}
where $z=re^{i\phi}$ and $F_{\phi}(ds) = F(d(s + \phi))$. 
\end{prop}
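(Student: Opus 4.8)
The plan is to start from the Edelman--Kostlan formula, which is the natural tool given the section heading: for a centered Gaussian analytic function $X$ with variance $K(z,z)=\E[|X(z)|^2]$, the first intensity of its zero set with respect to Lebesgue measure is
\[
\rho_1(z)=\frac{1}{4\pi}\Delta\log K(z,z)=\frac{1}{\pi}\,\partial_z\partial_{\zbar}\log K(z,z),
\]
using $\Delta=4\partial_z\partial_{\zbar}$. Writing $\psi_t(z):=(1-ze^{-it})^{-1}$, which is holomorphic in $z$ on $\D$, the representation \eqref{eq:Kzw} reads $K(z,w)=\int_{-\pi}^{\pi}\psi_t(z)\overline{\psi_t(w)}\,F(dt)$, so that $A:=K(z,z)=\int_{-\pi}^{\pi}|\psi_t(z)|^2\,F(dt)$. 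Since $F$ is a finite measure on the compact set $[-\pi,\pi]$ and $\psi_t(z)$ and its $z$-derivatives are bounded uniformly in $t$ for $z$ in compact subsets of $\D$, I can differentiate under the integral sign. Because $\psi_t$ is holomorphic and $\overline{\psi_t}$ antiholomorphic in $z$, this yields $\partial_z A=\int\psi_t'\overline{\psi_t}\,F(dt)$ and $\partial_z\partial_{\zbar}A=\int|\psi_t'|^2\,F(dt)$, and since $A$ is real-valued ($\partial_{\zbar}A=\overline{\partial_z A}$) the quotient rule for $\partial_z\partial_{\zbar}\log A$ gives
\[
\rho_1(z)=\frac{1}{\pi}\,\frac{A\,\partial_z\partial_{\zbar}A-|\partial_z A|^2}{A^2}.
\]

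The key algebraic step is to recognize that the numerator $A\,\partial_z\partial_{\zbar}A-|\partial_z A|^2$ is, after symmetrizing the resulting double integral in the two integration variables $s,t$, exactly one half of a squared Wronskian:
\[
A\,\partial_z\partial_{\zbar}A-|\partial_z A|^2=\frac{1}{2}\int_{-\pi}^{\pi}\!\int_{-\pi}^{\pi}\bigl|\psi_s(z)\psi_t'(z)-\psi_t(z)\psi_s'(z)\bigr|^2\,F(ds)F(dt).
\]
A direct computation of the Wronskian of the two Cauchy kernels gives the closed form $\psi_s\psi_t'-\psi_t\psi_s'=(e^{-it}-e^{-is})\bigl[(1-ze^{-is})^2(1-ze^{-it})^2\bigr]^{-1}$, in which the terms carrying the factor $z$ in the numerator cancel. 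This cancellation is the crux of the argument: it is precisely what produces the factor $e^{-it}-e^{-is}$, and hence, after taking moduli, the factor $1-\cos(t-s)$ appearing in the statement.

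Taking the modulus squared and using $|e^{-it}-e^{-is}|^2=2\{1-\cos(t-s)\}$ together with $|1-ze^{-it}|^{-2}=P_r(\phi-t)/(1-r^2)$ for $z=re^{i\phi}$ turns the integrand into $2\{1-\cos(t-s)\}P_r(\phi-s)^2P_r(\phi-t)^2(1-r^2)^{-4}$. Dividing by $A^2=(1-r^2)^{-2}\bigl(\int P_r(\phi-t)F(dt)\bigr)^2$ and collecting the powers of $1-r^2$ produces the claimed expression once I pass from $F$ to the rotated measure $F_{\phi}(ds)=F(d(s+\phi))$; since $P_r$ is even and $\cos(t-s)$ depends only on the difference, both numerator and denominator are invariant under this shift. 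The only genuinely delicate points are the justification of differentiation under the integral (routine, by dominated convergence on compact subsets of $\D$) and carrying out the Wronskian cancellation carefully, so that the main obstacle is conceptual rather than technical: spotting that the Edelman--Kostlan numerator collapses to a squared Wronskian of Cauchy kernels. Everything else is bookkeeping with the Poisson kernel.
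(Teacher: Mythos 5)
Your proposal is correct and follows essentially the same route as the paper: Edelman--Kostlan plus the representation \eqref{eq:Kzw}, with the numerator $K\,\partial_z\partial_{\zbar}K-|\partial_z K|^2$ identified (after symmetrizing in $s,t$) as $\tfrac12\iint|e^{-it}-e^{-is}|^2\,|1-ze^{-is}|^{-4}|1-ze^{-it}|^{-4}F(ds)F(dt)$, followed by the Poisson-kernel identity and the rotation $s\mapsto s+\phi$. The paper states this double-integral identity directly rather than phrasing it as a squared Wronskian of the Cauchy kernels $\psi_t$, but the computation is identical, so no substantive difference remains.
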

\begin{proof}
Edelman-Kostlan's formula \cite{EK95} shows that 
\[
\rho_1(z) 
= \frac{1}{\pi} \partial_z \partial_{\zbar} \log K(z,z) 
= \frac{1}{\pi} \frac{\partial_z \partial_{\zbar}K(z,z) \cdot K(z,z)- \partial_z
K(z,z) \cdot \partial_{\zbar} K(z,z)}{K(z,z)^2}. 
\]
From \eqref{eq:Kzz}, 
\[
K(z,z)^2 = \frac{1}{(1-r^2)^2}
 \tilde{F}(re^{i\phi})^2. 
\]
Also we see that 
\begin{align*}
\lefteqn{\partial_z \partial_{\zbar}K(z,z)\cdot K(z,z) -
 \partial_z K(z,z) \cdot \partial_{\zbar} K(z,z)} \\
&= \frac{1}{2} \int_{-\pi}^{\pi}\int_{-\pi}^{\pi} 
\frac{|e^{it} -
e^{is}|^2}{|1-ze^{-it}|^4|1-ze^{-is}|^4} F(ds)F(dt)\\
&= \frac{1}{(1-r^2)^4} 
\int_{-\pi}^{\pi}\int_{-\pi}^{\pi}
\{1-\cos(t-s)\} P_r(\phi-t)^2P_r(\phi-s)^2 F(ds)F(dt) \\
&= \frac{1}{(1-r^2)^4} 
\int_{-\pi}^{\pi}\int_{-\pi}^{\pi}
\{1-\cos(v-u)\} P_r(u)^2P_r(v)^2 F_{\phi}(du) F_{\phi}(dv). 
\end{align*}
By change of variables $s=\phi+u$ and $t=\phi+v$, we obtain the assertion. 
\end{proof}

For an integrable function $h$, we introduce 
\begin{equation}
 \cP_r(h) := \frac{1}{2\pi} \int_{-\pi}^{\pi}h(s) P_r(s) ds, \quad 
 \cQ_r(h) := \frac{1}{2\pi} \int_{-\pi}^{\pi}h(s) P_r(s)^2 ds. 
\label{eq:PrQr} 
\end{equation}
Suppose $F_{\phi}(ds) = \fphi(s)ds$. Since $P_r(s)$ is
symmetric, we have 
$\cQ_r(\fphi) = \cQ_r(\fphihat)$, 
$\cQ_r(\fphi \cos s) = \cQ_r(\fphihat \cos s)$, and 
$\cQ_r(\fphi \sin s) = \cQ_r(\fphicheck \sin s)$, where
$\fphihat$ and $\fphicheck$ are symmetrization and 
anti-symmetrization of $f_{\phi}$ defined in \eqref{eq:sym-antisym}. 
Therefore, when $F_{\phi}(ds) = \fphi(s)ds$, 
we can restate Proposition~\ref{prop:1-intensity} as
\begin{equation}
 \rho_1(re^{i\phi})
= \frac{1}{\pi(1-r^2)^2} 
\frac{\cS_r(f_{\phi})}{\cP_r(\hat{f}_{\phi})^2}.  
\label{eq:1-intensity}
\end{equation}
where 
\begin{equation}
\cS_r(\fphi) := \cQ_r(\fphihat)^2 - \cQ_r(\fphihat \cos s)^2
 - \cQ_r(\fphicheck \sin s)^2. 
\label{eq:Sr}
\end{equation}
In the next section, we will give asymptotic expansions for $\cP_r(h)$ and 
$\cQ_r(h)$ as $r \to 1$. 

\section{Precise asymptotics of Poisson integral and its relative} 
\label{sec:poisson_integral} 

The following notation will be used in this section and hereafter.
\begin{equation}
 x = x(s) = 1-\cos s, \quad y = y(r) = 1-r^2. 
\end{equation}
Let $\fsp := \{h :(-\pi,\pi] \to \R : \text{$h$ is smooth around $s=0$, and symmetric, i.e. $h(-s)=h(s)$}\}$ and 
$\fbsp := \{h \in \fsp : \text{$h$ is bounded} \}$. 
For $h \in \fsp$, we define 
\[
 Th(s) := \frac{h(s)-h(0)}{x(s)}, \quad 
Th(0) = h''(0).  
\]
Then, $T : \fsp \to \fsp$. 
If, in addition,  $h$ is bounded, so is $Th$; $T$ sends $\fbsp$ to itself. 
We also remark that 
\begin{equation}
 T^2h(0) = (Th)''(0) = \frac{1}{6} (h''(0) + h^{(4)}(0)). 
\label{eq:T2h} 
\end{equation}

\subsection{Asymptotic expansion I}
\label{sec:asymptotics1} 

Fatou's theorem states that if $h$ is integrable on $(-\pi,
\pi]$ and continuous at $s=0$ then $\cP_r(h)$ converges to
$h(0)$ (cf. \cite{Koosis98}). 
The following is a precise version of Fatou's theorem. 
\begin{prop}\label{prop:asymptotics1} 
Suppose $h \in \cH_b$. Then, as $r \to 1$, 
\begin{align}
\cP_r(h)
&=h(0) + \frac{\cI(Th)}{2} y 
+\frac{1}{4} \left\{ \cI(Th) - \frac{h''(0)}{2}\right\} y^2 \nonumber \\
&\quad +\frac{1}{8} \left\{ \frac{3}{2}\cI(Th) - h''(0)
- \frac{1}{2}\cI(T^2h) \right\}y^3 
+O(y^4), 
\label{eq:Prh}
\end{align}
where $y = 1-r^2$. 
\end{prop}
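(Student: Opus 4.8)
The plan is to compute the asymptotic expansion of $\cP_r(h)$ by exploiting the explicit form of the Poisson kernel and reducing everything to elementary integrals in the variable $x = 1-\cos s$. The key observation is that the Poisson kernel can be written in terms of $x$ and $y$: since $1 - 2r\cos s + r^2 = (1-r^2) + 2r(1-\cos s) = y + 2rx$, we have $P_r(s) = y/(y + 2rx)$. This suggests writing $h(s) = h(0) + x \cdot Th(s)$ by the very definition of the operator $T$, so that
\begin{equation}
\cP_r(h) = \frac{1}{2\pi}\int_{-\pi}^{\pi} \bigl(h(0) + x\, Th(s)\bigr)\frac{y}{y+2rx}\,ds = h(0) + \frac{1}{2\pi}\int_{-\pi}^{\pi} \frac{xy}{y+2rx}\, Th(s)\, ds,
\label{eq:plan-split}
\end{equation}
where I have used that $\cP_r(\mathbf{1}) = 1$ (the Poisson kernel integrates to $2\pi$ against the normalization in $\cP_r$). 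This isolates the constant term $h(0)$ cleanly and leaves an integral whose integrand carries an extra factor of $x$, which is what makes the remainder smaller.

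**Next I would** iterate this splitting. The factor $xy/(y+2rx)$ can be rewritten as $\tfrac{1}{2r}\bigl(y - y^2/(y+2rx)\bigr)$, separating a term proportional to $y$ (which integrates $Th$ against a constant, producing $\cI(Th)\,y$ up to the factor $1/(2r)$) from a term that again has the Poisson-kernel structure $y^2/(y+2rx)$. Applying the identity $Th(s) = Th(0) + x\, T^2h(s) = h''(0) + x\, T^2h(s)$ to that remaining term repeats the mechanism one level down, peeling off another power of $y$ and introducing $\cI(T^2h)$. Carrying this recursion to the required depth, and expanding the prefactors $1/(2r)$ as powers of $y$ via $r = (1-y)^{1/2}$, produces the stated expansion. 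The coefficients $\tfrac12$, $\tfrac14$, $\tfrac18$ and the specific combinations $\{\cI(Th) - h''(0)/2\}$ and $\{\tfrac32\cI(Th) - h''(0) - \tfrac12\cI(T^2h)\}$ should emerge from collecting like powers of $y$ after substituting these expansions; boundedness of $h$, $Th$, and $T^2h$ in $\fbsp$ guarantees the integrals $\cI(Th)$, $\cI(T^2h)$ are finite and that the recursion is valid.

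**The main obstacle** will be controlling the residual integral at the final stage and proving it is genuinely $O(y^4)$ uniformly. After peeling off three orders, the leftover term has the form $(\text{const})\, y^4 \cdot \tfrac{1}{2\pi}\int_{-\pi}^{\pi} \frac{x^{?}}{y+2rx}\, T^k h(s)\, ds$ or similar, and one must verify that this integral stays bounded as $y \to 0$ rather than developing a logarithmic or negative-power divergence from the region $s \approx 0$ where $x \approx s^2/2$ is small. The delicate point is that near $s=0$ the denominator $y + 2rx$ degenerates, so the bound depends on how many compensating powers of $x$ sit in the numerator; I would split the integral into a neighborhood $|s| < \delta$ and its complement, use $x \asymp s^2$ on the former together with the smoothness of $h$ (hence of $T^k h$, which remains smooth at $0$ by the $T : \fsp \to \fsp$ property and formula \eqref{eq:T2h}), and bound the latter trivially since the denominator is bounded below there. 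A clean way to organize this is to prove a lemma giving the exact leading behavior of $\tfrac{1}{2\pi}\int \frac{x^m}{(y+2rx)^n}\,ds$ as $y\to 0$ for the relevant small exponents, and then assemble the expansion from these building blocks; this also makes the bookkeeping of the rational coefficients transparent and reduces the error estimate to a single uniform bound.
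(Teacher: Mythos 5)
Your overall strategy---split $h(s)=h(0)+x\,Th(s)$, rewrite the kernel factor so that a constant piece (producing $\cI(Th)\,y$) peels off from a term that is again a Poisson integral, iterate, then Taylor-expand the prefactors in $y$---is exactly the paper's proof (this is Lemma~\ref{lem:Prrecursive} applied twice). However, your cornerstone identity is false: in fact
\[
1-2r\cos s+r^2=(1-r)^2+2r(1-\cos s), \qquad\text{not}\qquad (1-r^2)+2r(1-\cos s),
\]
so $P_r(s)=y/\{(1-r)^2+2rx\}$, not $y/(y+2rx)$. The two denominators differ by $2r(1-r)$, and near $s=0$---the only region that matters---they have different orders of magnitude: $(1-r)^2=y^2/(1+r)^2\asymp y^2$ versus $y$. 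Your kernel $y/(y+2rx)$ is not even an approximate identity: a direct computation gives $\frac{1}{2\pi}\int_{-\pi}^{\pi}\frac{y\,ds}{y+2rx}=\sqrt{y/(y+4r)}\to 0$ as $r\to1$, contradicting $\cP_r(\one)=1$, which you also (correctly) invoke in your first display. All the subsequent manipulations are calibrated to the wrong denominator: the identity $xy/(y+2rx)=\tfrac{1}{2r}\{y-y^2/(y+2rx)\}$ and the claim that $y^2/(y+2rx)$ ``has the Poisson-kernel structure'' hold only for $y+2rx$, so carrying out your plan literally would produce wrong coefficients at every order.

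The repair is mechanical and lands precisely on the paper's argument. With the correct denominator, peeling gives
\[
\frac{xy}{(1-r)^2+2rx}=\frac{y}{2r}-\frac{y^2}{2r(1+r)^2}\,P_r(s),
\]
note the factor $(1+r)^{-2}$ missing from your version, whence the exact recursion $\cP_r(h)=h(0)+\frac{y}{2r}\,\cI(Th)-\frac{y^2}{2r(1+r)^2}\,\cP_r(Th)$, which is Lemma~\ref{lem:Prrecursive}; applying it twice and expanding $\frac{1}{2r}$, $\frac{1}{2r(1+r)^2}$, $\frac{1}{4r^2(1+r)^2}$ in powers of $y$ yields \eqref{eq:Prh}. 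This also dissolves what you call the main obstacle: after two applications the remainder is exactly $\frac{y^4}{4r^2(1+r)^4}\,\cP_r(T^2h)$, and since $T^2h\in\fbsp$ and the Poisson kernel is nonnegative with unit mass, $|\cP_r(T^2h)|\le\sup|T^2h|$, so the $O(y^4)$ bound is immediate---no splitting near $s=0$ and no auxiliary lemma on integrals of $x^m/(y+2rx)^n$ is needed. Your anticipation of a delicate remainder estimate is a symptom of the same error: with the wrong kernel the peeled-off term does not close back onto $\cP_r$, whereas with the correct one it does, and the error term is again a Poisson integral of a bounded function.
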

\begin{proof}
 We use Lemma~\ref{lem:Prrecursive} twice to obtain 
\begin{align*}
 \cP_r(h) 
&= h(0) + \frac{y}{2r} \cI(Th) -
 \frac{y^2}{2r(1+r)^2} Th(0)
- \frac{y^3}{4r^2(1+r)^2} \cI(T^2h)\\
&\quad + \frac{y^4}{4r^2(1+r)^4} \cP_r(T^2h). 
\end{align*}
By expanding the functions of $r$ in $y$, we obtain the
 assertion. 
\end{proof}

We observe a recursion relation of the Poisson integral. 
\begin{lem}\label{lem:Prrecursive} 
Suppose $h \in \cH_b$. Then, 
\begin{align*}
\cP_r(h)
=h(0) + \frac{y}{2r} \cI(Th) - \frac{y^2}{2r(1+r)^2} \cP_r(Th). 
\end{align*}
\end{lem}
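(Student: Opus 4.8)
The plan is to prove the recursion relation in Lemma~\ref{lem:Prrecursive} by a purely computational manipulation of the integral defining $\cP_r(h)$. The central identity to exploit is the definition of the operator $T$, namely $h(s) = h(0) + x(s)\, Th(s)$ with $x(s) = 1-\cos s$. Substituting this into $\cP_r(h) = \frac{1}{2\pi}\int_{-\pi}^{\pi} h(s) P_r(s)\, ds$ splits the integral into two pieces: the constant term $h(0)$ contributes $h(0)\,\cP_r(1) = h(0)$, since $\cP_r(1) = \frac{1}{2\pi}\int_{-\pi}^{\pi} P_r(s)\,ds = 1$ by the normalization of the Poisson kernel. The remaining piece is $\frac{1}{2\pi}\int_{-\pi}^{\pi} (1-\cos s)\, Th(s)\, P_r(s)\, ds$, which I would rewrite as $\cP_r\big((1-\cos s)\,Th\big)$.

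The key step is then to express $(1-\cos s)\, P_r(s)$ in terms of $P_r(s)$ itself with constant coefficients in $s$, so that the weight can be pulled through the operator $\cP_r$. Recall $P_r(s) = \frac{1-r^2}{1-2r\cos s + r^2}$, so that $P_r(s)\big(1 - 2r\cos s + r^2\big) = 1-r^2 = y$. Solving for $\cos s$ gives $2r\cos s\cdot P_r(s) = (1+r^2)P_r(s) - y$, hence
\[
(1-\cos s)\,P_r(s) = P_r(s) - \frac{(1+r^2)P_r(s) - y}{2r}
= \frac{(2r - 1 - r^2)P_r(s) + y}{2r}
= \frac{y - y\,P_r(s)}{2r},
\]
using $2r - 1 - r^2 = -(1-r)^2$ and $y = 1-r^2 = (1-r)(1+r)$, so that $(1-r)^2 = y(1-r)/(1+r) = y^2/(1+r)^2$ after the appropriate factoring. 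Being careful with this algebra, one finds $(1-\cos s)\,P_r(s) = \frac{y}{2r} - \frac{y^2}{2r(1+r)^2}\,P_r(s)$. Integrating $Th(s)$ against this weight and using that $\frac{1}{2\pi}\int_{-\pi}^{\pi} Th(s)\, ds = \cI(Th)$, the first constant piece yields $\frac{y}{2r}\,\cI(Th)$ and the second yields $-\frac{y^2}{2r(1+r)^2}\,\cP_r(Th)$, which is exactly the asserted recursion.

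I expect the main obstacle to be purely the arithmetic bookkeeping in the step that rewrites $(1-\cos s)\,P_r(s)$, where the factors of $2r$, $(1+r)^2$, and $y$ must be tracked exactly to match the stated coefficients; a sign error or a misplaced power of $(1+r)$ there would propagate into the wrong recursion and hence into an incorrect asymptotic expansion in Proposition~\ref{prop:asymptotics1}. A secondary point requiring a remark is the validity of applying $\cP_r$ to $Th$: since $h \in \fbsp$ implies $Th \in \fbsp$ (as noted in the text that $T$ sends $\fbsp$ to itself), $Th$ is bounded and integrable, so $\cP_r(Th)$ is well-defined and the splitting of the integral is legitimate. With these two points handled, the identity follows by a direct, finite computation with no convergence or analytic subtleties.
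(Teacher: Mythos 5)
Your proof is correct and is essentially the paper's own argument: the paper also splits $h(s)=h(0)+x(s)\,Th(s)$ and uses the identity $r\,\frac{2x}{(1-r)^2+2rx} = 1 - \frac{1-r^2}{(1+r)^2}P_r(s)$, which after multiplication by $\frac{y}{2r}$ is exactly your rewriting of $(1-\cos s)P_r(s)$; the paper merely packages it through an auxiliary operator $\cU_r$. One small slip: the middle term of your displayed chain, $\frac{y - y\,P_r(s)}{2r}$, should read $\frac{y-(1-r)^2P_r(s)}{2r}$ (since $2r-1-r^2=-(1-r)^2\neq -y$), but your final identity $(1-\cos s)P_r(s) = \frac{y}{2r}-\frac{y^2}{2r(1+r)^2}P_r(s)$ and the resulting recursion are correct.
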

\begin{proof}
We also define the operator $\cU_r$ by 
\[
\cU_r(f) = \frac{1}{2\pi} \int_{-\pi}^{\pi}
 f(s)\frac{2x}{(1-r)^2 + 2rx} ds. 
\]
Then, 
\[
 \cP_r(h) = h(0) + \frac{1-r^2}{2} \cU_r(Th). 
\]
On the other hand, the following identity 
\[
r \frac{2x}{(1-r)^2 + 2rx} 
= 1 - \frac{1-r^2}{(1+r)^2}P_r(s)
\]
yields 
\[
 r \cU_r(f) = \cI(f) - \frac{1-r^2}{(1+r)^2} \cP_r(f). 
\]
Hence we obtain the equality. 
\end{proof}

\subsection{Asymptotic expansion II}
\label{sec:asymptotics2} 

In this subsection, we give the following asymptotic expansion of $Q_r(h)$ as $r \to 1$. 
Since $P_r(s)ds$ is close to $\delta_0(ds)$, $P_r(s)^2 ds$ is close to $\delta_0(ds)^2$, which gives a diverging term. 
\begin{prop}\label{prop:asymptotics2}
Suppose $h \in \cH_b$. Then, as $r \to 1$, 
\begin{align*} 
 \cQ_r(h) 
&= 2h(0) y^{-1} - h(0) 
+ \frac{1}{4}h''(0) y 
+ \left(\frac{1}{4}\cI(T^2 h) + \frac{1}{8}h''(0) \right)
 y^2 \\
&\quad + \left(\frac{1}{4}\cI(T^2 h) + \frac{1}{16}h''(0) 
- \frac{1}{64} h^{(4)}(0) \right)
 y^3 \\
&\quad + \left(
\frac{1}{4} \cI(T^2 h) -  \frac{1}{16} \cI(T^3 h)
+ \frac{1}{32}h''(0) -\frac{3}{128} h^{(4)}(0) 
\right)
 y^4 \\
&\quad +O(y^5), 
\end{align*}
where $y=1-r^2$. 
\end{prop}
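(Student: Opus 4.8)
The plan is to obtain the expansion of $\cQ_r(h)$ by the same recursive mechanism that produced Proposition~\ref{prop:asymptotics1}, but tailored to the extra factor of $P_r$. The essential difference is that $\cQ_r$ carries a $P_r(s)^2$ weight rather than $P_r(s)$, so the leading behaviour is singular: since $P_r(s)ds$ concentrates at $s=0$ with total mass $2\pi$ (so that $\cP_r(1)=1$), the squared kernel $P_r(s)^2$ has total mass diverging like $2y^{-1}$, which explains the leading term $2h(0)y^{-1}$. Concretely, I would first record the elementary identity $P_r(s)^2 = P_r(s)\cdot P_r(s)$ together with the algebraic relation $P_r(s) = \frac{y}{(1-r)^2 + 2rx}$ (using $x=1-\cos s$, $y=1-r^2$), and then seek a recursion expressing $\cQ_r(h)$ in terms of $\cP_r$ and $\cQ_r$ applied to $Th$, exactly paralleling Lemma~\ref{lem:Prrecursive}.

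The key step is to derive such a recursion. I would write $h(s) = h(0) + x\,Th(s)$ and split
\[
\cQ_r(h) = h(0)\,\cQ_r(1) + \frac{1}{2\pi}\int_{-\pi}^{\pi} x\,Th(s)\,P_r(s)^2\,ds.
\]
The first piece reduces to computing $\cQ_r(1)$ explicitly; since $P_r$ is the Poisson kernel, $\cQ_r(1) = \frac{1}{2\pi}\int P_r^2 = \frac{1+r^2}{(1-r^2)^2}\cdot y = \frac{1+r^2}{1-r^2}$, whose expansion in $y$ begins $2y^{-1}-1+\cdots$ and furnishes the singular and constant leading terms. For the second piece I would use $x\,P_r(s)^2 = P_r(s)\cdot\big(x\,P_r(s)\big)$ and the identity $x\,P_r(s) = \frac{y\,x}{(1-r)^2+2rx}$ already exploited in the proof of Lemma~\ref{lem:Prrecursive}, rewriting $\frac{2rx}{(1-r)^2+2rx} = 1 - \frac{y}{(1+r)^2}P_r(s)$. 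This converts the integral into a combination of $\cP_r(Th)$ and $\cQ_r(Th)$ with explicit rational-in-$r$ coefficients, yielding a recursion of the schematic form $\cQ_r(h) = h(0)\,\cQ_r(1) + c_1(r)\,y\,\cP_r(Th) + c_2(r)\,y^2\,\cQ_r(Th)$.

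Once the recursion is in hand, the expansion follows by iteration: applying it once reduces $\cQ_r(h)$ to $\cP_r(Th)$, for which Proposition~\ref{prop:asymptotics1} already gives a four-term expansion, plus a remainder term $y^2\,\cQ_r(Th)$ that is again of the same type but one $T$-derivative higher. To reach the $O(y^5)$ accuracy demanded, I expect to apply the recursion twice (generating $\cP_r(Th)$ and $\cP_r(T^2h)$ terms via Proposition~\ref{prop:asymptotics1}) and to bound the final residual $\cQ_r(T^3h)$ crudely by its leading $y^{-1}$ behaviour, which after the accumulated $y^4$ prefactor contributes only at order $y^3$ and below — hence the appearance of $\cI(T^3h)$ in the $y^4$ coefficient. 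Throughout I would use $T^2h(0)=\tfrac16(h''(0)+h^{(4)}(0))$ from \eqref{eq:T2h} (and its analogue for $T^3h(0)$) to express the coefficients in terms of $h(0),h''(0),h^{(4)}(0),\cI(T^2h),\cI(T^3h)$.

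The main obstacle will be bookkeeping rather than conceptual: I must expand every rational coefficient $c_i(r)$ in powers of $y$ to sufficiently high order and correctly combine the contributions of $\cQ_r(1)$, of $\cP_r(Th)$, and of $\cP_r(T^2h)$, keeping careful track of which $T$-order each term carries so that the $\cI(T^2h)$ and $\cI(T^3h)$ coefficients land in the right powers of $y$. The delicate point is ensuring the remainder is genuinely $O(y^5)$: since each application of the recursion lowers the power of $y$ in front of the deeper $\cQ_r$ term while that term itself diverges like $y^{-1}$, I would verify that two iterations suffice to push the unexpanded remainder past $y^4$, so that only the boundedness of $T^3h$ (guaranteed because $T$ preserves $\fbsp$) is needed for the error estimate.
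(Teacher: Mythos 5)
Your key step --- a self-recursion for $\cQ_r$ exactly parallel to Lemma~\ref{lem:Prrecursive} --- is correct, and it is a genuinely different route from the paper's. Writing $h = h(0) + x\,Th$ and $x\,P_r(s)^2 = P_r(s)\cdot\big(x P_r(s)\big)$, the identity $r\,\tfrac{2x}{(1-r)^2+2rx} = 1 - \tfrac{1-r^2}{(1+r)^2}P_r(s)$ gives
\begin{equation*}
\cQ_r(h) \;=\; h(0)\,\cQ_r(1) \;+\; \frac{y}{2r}\,\cP_r(Th) \;-\; \frac{y^2}{2r(1+r)^2}\,\cQ_r(Th),
\qquad \cQ_r(1)=\frac{2}{y}-1 .
\end{equation*}
The paper instead splits off two Taylor terms, $h = h(0) + h''(0)x + T^2h\cdot x^2$, reduces the $x^2$-part to the operators $\cV_r$ and $\cK_r$, and proves a separate expansion lemma for $\cK_r$ (Lemma~\ref{lem:recursion4Kr}); your scheme replaces that extra lemma by repeated use of Proposition~\ref{prop:asymptotics1}, which is a viable and arguably tidier alternative.

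The gap is in your error accounting, and as literally stated the proof would fail to reach the claimed precision. Each application of the recursion attaches a factor $y^2$ (times a bounded function of $r$) to the new $\cQ_r$-term, so after two applications the unexpanded residual is $\frac{y^4}{4r^2(1+r)^4}\,\cQ_r(T^2h)$ --- not $\cQ_r(T^3h)$ --- and since $\cQ_r(T^2h) = 2\,T^2h(0)\,y^{-1} - T^2h(0) + O(y)$, bounding it ``crudely by its leading $y^{-1}$ behaviour'' leaves an undetermined error of order $y^3$. That destroys exactly the $y^3$ and $y^4$ coefficients you must produce: the residual's own leading contribution $\frac{1}{32}T^2h(0)\,y^3\big(1+O(y)\big)$, with $T^2h(0)=\frac16\big(h''(0)+h^{(4)}(0)\big)$ from \eqref{eq:T2h}, is needed to arrive at the stated $-\frac{1}{64}h^{(4)}(0)$ in the $y^3$ coefficient, so it cannot be discarded. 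The fix is a third application of the recursion, after which the residual $\frac{y^6}{8r^3(1+r)^6}\,\cQ_r(T^3h)$ is genuinely $O(y^5)$ by the crude bound $|\cQ_r(T^3h)|\le \|T^3h\|_\infty\,\cQ_r(1)$ (legitimate since $T$ preserves $\fbsp$). Your attribution of the $\cI(T^3h)$ term is also backwards: an $O$-bound on a discarded residual can never generate a coefficient. In fact $\cI(T^3h)$ enters through the $y^3$ term of the expansion of $\cP_r(Th)$ (where $\cI\big(T^2(Th)\big)=\cI(T^3h)$) and the $y$ term of the expansion of $\cP_r(T^2h)$, each contributing $-\frac{1}{32}\cI(T^3h)\,y^4$, which sum to the stated $-\frac{1}{16}\cI(T^3h)$. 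With three iterations, $\cQ_r(1)=\frac2y-1$, and Proposition~\ref{prop:asymptotics1} applied to $Th$ (through order $y^3$) and to $T^2h$ (through order $y$), the computation does reproduce all the stated coefficients.
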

\begin{proof}
Before proving this expansion formula, we first observe
that 
\begin{equation}
 \cQ_r(h) = h(0) \cQ_r(1) + Th(0) \cQ_r(x)
+ \cQ_r(T^2h \cdot x^2) 
\label{eq:Qrh}
\end{equation}
since $h(s) = h(0) + Th(0)x + T^2h(s)x^2$. 
To expand the third term, we introduce two more operators: 
\begin{align*}
\cV_r(h)
&:= \frac{1}{2\pi} \int_{-\pi}^{\pi} h(s)\left\{\frac{2x}{(1-r)^2 + 
 2rx}\right\}^2 ds, \\ 
\cK_r(h) &:= \frac{1}{2\pi} \int_{-\pi}^{\pi} h(s)
\frac{\{2x - (1-r)\}\{(1-r)^2 + 2(1+r)x\}}{\{(1-r)^2 + 2r
 x\}^2} ds.  
\end{align*}
We note that 
\begin{equation}
 \cV_r(h) = \cI(h) + (1-r) \cK_r(h).   
\label{eq:Vrh}
\end{equation}
Since $Q_r(h \cdot x^2) = \frac{y^2}{4} \cV_r(h)$, 
from \eqref{eq:Qrh} and \eqref{eq:Vrh}, we have 
\begin{equation}
 \cQ_r(h) = h(0) \cQ_r(1) + h''(0) \cQ_r(x)
+ \frac{y^2}{4} \cI(T^2h)
+ \frac{y^3}{4(1+r)} \cK_r(T^2h).  
\end{equation}
It is easy to see that 
\begin{align}
  \cQ_r(1) &= \frac{1+r^2}{1-r^2} = \frac{2}{y} - 1, \nonumber\\
  \cQ_r(x) &= \frac{1-r}{1+r} = \frac{1}{4}y + \frac{1}{8} y^2+ \frac{5}{64} y^3 
+ \frac{7}{128} y^4 + O(y^5). 
\label{eq:Qrx}
\end{align}
By \eqref{eq:T2h}, \eqref{eq:expansion4Kr} below, and $(1+r)^{-1} = \frac{1}{2} + \frac{1}{8}y + O(y^2)$, 
we obtain the assertion. 
\end{proof}

We have a recursion formula for $\cK_r$, from which we obtain an expansion formula. 
\begin{lem}\label{lem:recursion4Kr}
Suppose $h \in \cH_b$. 
As $r \to 1$, 
\begin{equation}
\cK_r(h)
=2 \cI(h) -\frac{3}{4} h(0)
+ \left(-\frac{15}{16} h(0) + \frac{3}{2} \cI(h) -
 \frac{1}{2}\cI(Th)
\right) y + O(y^2). 
\label{eq:expansion4Kr} 
\end{equation}
\end{lem}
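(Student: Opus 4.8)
The plan is to reduce $\cK_r$ to the already-understood operators $\cP_r$ and $\cQ_r$ by exploiting the factorization of the $\cV_r$-kernel, and then to read off the expansion from Proposition~\ref{prop:asymptotics1} together with the cheap low-order behaviour of $\cQ_r$. First I would recall from \eqref{eq:Vrh} that $\cV_r(h) = \cI(h) + (1-r)\cK_r(h)$, so it suffices to expand $\cV_r(h)$ and divide by $1-r$.

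The key input is the identity established in the proof of Lemma~\ref{lem:Prrecursive}, namely $r\,\frac{2x}{(1-r)^2+2rx} = 1 - \frac{y}{(1+r)^2}P_r(s)$, which exhibits the $\cU_r$-kernel as $\frac{1}{r}\bigl(1 - \frac{y}{(1+r)^2}P_r\bigr)$. Squaring this kernel and integrating against $h$ turns every occurrence of it into the three familiar operators, giving the reduction formula
\[
\cV_r(h) = \frac{1}{r^2}\cI(h) - \frac{2y}{r^2(1+r)^2}\cP_r(h) + \frac{y^2}{r^2(1+r)^4}\cQ_r(h).
\]
Substituting this into $\cK_r(h) = \bigl(\cV_r(h)-\cI(h)\bigr)/(1-r)$ and using $y/(1-r) = 1+r$ yields the clean intermediate form
\[
\cK_r(h) = \frac{1+r}{r^2}\cI(h) - \frac{2}{r^2(1+r)}\cP_r(h) + \frac{y}{r^2(1+r)^3}\cQ_r(h).
\]
From here the expansion is bookkeeping: I would expand the rational prefactors in powers of $y$ (using $r^2 = 1-y$ and $1+r = 2 - \tfrac12 y + O(y^2)$), insert the two-term expansion $\cP_r(h) = h(0) + \tfrac12\cI(Th)\,y + O(y^2)$ from Proposition~\ref{prop:asymptotics1}, and collect terms through order $y$.

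The one genuine subtlety — the place that needs care rather than brute force — is that $\cQ_r$ enters here, while $\cQ_r$ is itself the object of Proposition~\ref{prop:asymptotics2}, whose proof invokes this very lemma. This apparent circularity is harmless because in the displayed intermediate form the $\cQ_r$-term carries a factor $y$, so only the leading and constant parts of $\cQ_r(h)$ are needed, namely $\cQ_r(h) = 2h(0)y^{-1} - h(0) + O(y)$. I would obtain these directly and independently from the decomposition \eqref{eq:Qrh}, using the exact evaluations $\cQ_r(1) = 2y^{-1}-1$ and $\cQ_r(x) = O(y)$ from \eqref{eq:Qrx} together with $\cQ_r(T^2h\cdot x^2) = \tfrac{y^2}{4}\cV_r(T^2h) = O(y^2)$; none of this requires the full expansion of $\cQ_r$ or of $\cK_r$.

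Tracking which order of each factor contributes to the constant and to the $y^1$ coefficient then produces $2\cI(h) - \tfrac34 h(0)$ at order $y^0$ and $-\tfrac{15}{16}h(0) + \tfrac32\cI(h) - \tfrac12\cI(Th)$ at order $y^1$, matching \eqref{eq:expansion4Kr}. The main obstacle I anticipate is purely organizational: keeping the circularity benign by feeding in $\cQ_r$ only to its constant order, and carefully separating which of the $O(y)$ contributions to the final $y^1$ coefficient come from the prefactors versus from $\cP_r(h)$ and $\cQ_r(h)$.
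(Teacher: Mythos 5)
Your proof is correct, and it takes a genuinely different route from the paper's. The paper begins with a bespoke algebraic identity that writes the $\cK_r$-kernel as $2$ plus $(1-r)$ times a combination of the $\cU_r$-, $P_r$- and $\cV_r$-type kernels; feeding in Lemma~\ref{lem:Prrecursive} and \eqref{eq:Vrh} then produces the recursion \eqref{eq:recursion4Kr}, in which $\cK_r(h)$ and $\cK_r(Th)$ reappear on the right multiplied by $y^2$, so Taylor expansion of the rational coefficients gives \eqref{eq:expansion4Kr} with the self-reference harmless at order $y$. You instead invert \eqref{eq:Vrh} to get $\cK_r(h)=\bigl(\cV_r(h)-\cI(h)\bigr)/(1-r)$ and square the identity $r\,\tfrac{2x}{(1-r)^2+2rx}=1-\tfrac{y}{(1+r)^2}P_r(s)$ from the proof of Lemma~\ref{lem:Prrecursive}, obtaining the exact, non-recursive reduction
\begin{equation*}
\cK_r(h) = \frac{1+r}{r^2}\,\cI(h) - \frac{2}{r^2(1+r)}\,\cP_r(h) + \frac{y}{r^2(1+r)^3}\,\cQ_r(h),
\end{equation*}
which I have checked is an identity, and whose expansion (using $\tfrac{1+r}{r^2}=2+\tfrac32 y+O(y^2)$, $\tfrac{2}{r^2(1+r)}=1+\tfrac54 y+O(y^2)$, $\tfrac{y}{r^2(1+r)^3}=\tfrac{y}{8}+\tfrac{7}{32}y^2+O(y^3)$, $\cP_r(h)=h(0)+\tfrac12\cI(Th)y+O(y^2)$, $\cQ_r(h)=2h(0)y^{-1}-h(0)+O(y)$) indeed yields the constant term $2\cI(h)-\tfrac34 h(0)$ and $y$-coefficient $\tfrac32\cI(h)-\tfrac12\cI(Th)+\bigl(-\tfrac54+\tfrac{5}{16}\bigr)h(0)=-\tfrac{15}{16}h(0)+\tfrac32\cI(h)-\tfrac12\cI(Th)$, matching \eqref{eq:expansion4Kr}. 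Your handling of the apparent circularity with Proposition~\ref{prop:asymptotics2} is also sound, with one small point to make explicit: the claim $\cQ_r(T^2h\cdot x^2)=\tfrac{y^2}{4}\cV_r(T^2h)=O(y^2)$ needs $\cV_r(T^2h)=O(1)$, which should be justified by the pointwise kernel bound $\tfrac{2x}{(1-r)^2+2rx}\le 1/r$ (appealing to \eqref{eq:Vrh} here would reintroduce $\cK_r$ and genuine circularity). What your route buys is a closed form for $\cK_r$ in terms of $\cI$, $\cP_r$, $\cQ_r$ that sidesteps the paper's more elaborate kernel identity; what the paper's recursion buys is that it needs no information about $\cQ_r$ whatsoever, so Proposition~\ref{prop:asymptotics2} sits cleanly downstream of the lemma rather than being partially entangled with it.
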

\begin{proof}
From the identity 
\begin{align*}
\lefteqn{\frac{\{2x - (1-r)\}\{(1-r)^2 + 2(1+r)x\}}{\{(1-r)^2 + 2r x\}^2}} \\
&= 2 + (1-r) 
\left\{
(1+2r) \left(\frac{2x}{(1-r)^2 + 2 rx}\right)^2 
+ (2r-3) \frac{1}{(1-r)^2 + 2 rx}
+ r(2r-3) \frac{2x}{\{(1-r)^2 + 2 rx\}^2}
\right\}, 
\end{align*}
we see that 
\begin{align*}
\cK_r(h)
&= 2 \cI(h) 
+ \frac{1+2r}{1+r} \cV_r(h) y
+ \frac{2r-3}{1+r} P_r(h) 
+ (1-r) r(2r-3) \left\{h(0) \frac{2}{(1+r)^3} \frac{1}{1-r}
 + \frac{1}{2} \cV_r(Th)\right\}. 
\end{align*}
For the fourth term, we used $h(s) = h(0) + xTh(s)$ and
 \eqref{eq:Qrx}. 
By using Lemma~\ref{lem:Prrecursive} and \eqref{eq:Vrh}, 
we obtain the following recursion equation: 
\begin{align}
\cK_r(h) &=2 \cI(h) + \frac{(2r-3)(1+4r+r^2)}{(1+r)^3} h(0) 
+ \left\{\frac{1+2r}{1+r} \cI(h) +
 \frac{(2r-3)(1+r^2)}{2r(1+r)} \cI(Th) \right\} y \nonumber\\
&\quad + \left\{\frac{1+2r}{(1+r)^2} \cK_r(h)  -
 \frac{2r-3}{2r(1+r)^3} \cP_r(Th) + \frac{r(2r-3)}{2(1+r)^2}
\cK_r(Th)\right\} y^2. 
\label{eq:recursion4Kr} 
\end{align}
From \eqref{eq:recursion4Kr} together with 
the Taylor expansions of the coefficients at $r=1$, we see that 
\begin{align*}
\cK_r(h)
&=2 \cI(h) + 
\Big(-\frac{3}{4} -\frac{15}{16} y + O(y^2)\Big) h(0) \\
&\quad + \left\{\Big(\frac{3}{2} - \frac{1}{8} y + O(y^2)\Big) \cI(h) +
 \Big(-\frac{1}{2} -\frac{5}{8}y + O(y^2)\Big) \cI(Th) \right\} y 
+O(y^2), 
\end{align*}
from which we obtain the assertion. 
\end{proof}

\section{Proof of Theorem~\ref{thm:main}}\label{sec:proof}

Suppose $F_{\phi}(ds) = f_{\phi}(s)ds$ and $f_{\phi}(s)>0$. 
From \eqref{eq:1-intensity} and \eqref{eq:Sr}, we recall 
\begin{align*}
\rho_1(re^{i\phi})
&= \frac{1}{\pi(1-r^2)^2} 
\frac{\{\cQ_r(\hat{f}_{\phi})\}^2 - \{\cQ_r(\hat{f}_{\phi} \cos s)\}^2 -
 \{\cQ_r(\check{f_{\phi}} \sin
 s)\}^2}{\{\cP_r(\hat{f}_{\phi})\}^2}, 
\end{align*}
where $\fphihat$ and $\fphicheck$ are symmetrization and
anti-symmetrization of $\fphi$, respectively, as defined in
\eqref{eq:sym-antisym}. 

Suppose $h \in \cH_b$. 
From Proposition~\ref{prop:asymptotics2}, 
setting $y=1-r^2$, we have 
\begin{align*}
 \cQ_r(h) 
&=: \sum_{k=-1}^4 Q_k(h) y^k + O(y^5), 
\end{align*}
where 
\begin{align*}
& Q_{-1}(h) = 2 h(0), \ 
 Q_0(h) = - h(0), \ 
 Q_1(h) = \frac{1}{4} h''(0), \\
& Q_2(h) = \frac{1}{4} \left\{ \cI(T^2 h) + \frac{h''(0)}{2} \right\},
 \\ 
& Q_3(h) = \frac{1}{4}\left\{\cI(T^2 h) 
+ \frac{1}{4} h''(0) - \frac{1}{16} h^{(4)}(0) \right\} \\
& Q_4(h) = 
\frac{1}{4} \left\{\cI(T^2 h) -  \frac{1}{4} \cI(T^3 h)
+ \frac{1}{8}h''(0) -\frac{3}{32} h^{(4)}(0) \right\}. 
\end{align*}
Then, we have 
\begin{align*}
\cR_r(h)
&:= \cQ_r(h)^2 = \sum_{p=-2}^3 \left(\sum_{k+l=p}
Q_k(h)Q_l(h)\right) y^p + O(y^4) \\
&=: \sum_{k=-2}^3 R_k(h)y^k + O(y^4). 
\end{align*} 
A direct computation shows that 
\begin{align*}
 R_{-2}(h) &= 4h(0)^2, \ R_{-1}(h) = -4h(0)^2, \\
 R_0(h) &= h(0) \{h(0) + h''(0)\}, \ R_{1}(h) = h(0) \cI(T^2 h) \\
 R_{2}(h) 
&= h(0) \left\{\frac{1}{2} \cI(T^2 h) - \frac{1}{16} h^{(4)}(0)\right\} 
+ \frac{1}{16} \{h''(0)\}^2 \\ 
&= \frac{1}{2} R_1(h) + \frac{1}{16} \{ h''(0)^2 - h(0) h^{(4)}(0) \} \\
 R_3(h) 
&= \frac{1}{2}\left\{h(0)+\frac{1}{4} h''(0)
 \right\} \cI(T^2 h) - \frac{1}{4} h(0) \cI(T^3 h) + \frac{1}{16} \{h''(0)^2 -
 h(0) h^{(4)}(0)\} \\ 
&= R_2(h) + \frac{1}{8}h''(0) \cI(T^2 h) - \frac{1}{4} h(0)
 \cI(T^3 h). 
\end{align*}

For $g$ (instead of $f_{\phi}$) which is not necessarily symmetric, we now compute
\eqref{eq:Sr}, i.e., 
\begin{equation}
\cS_r(g) := \cR_r(\ghat) - \cR_r(\ghat \cos s) - \cR_r(\gcheck \sin s) 
= \sum_{k=-2}^3 S_k(g) y^k + O(y^4),  
\label{eq:Srg}
\end{equation}
which is equivalent to 
\[
 S_k(g) = R_k(\ghat) - R_k(\ghat \cos s) - R_k(\gcheck \sin
 s). 
\]
We note that if $h \in \cH$, then 
\[
(h \cos s) ''(0) = h''(0) - h(0), \  
h^{(4)}(0) - (h \cos s)^{(4)}(0) = -h(0) + 6 h''(0). 
\]
and 
\[
T h - T(h \cos s) = h
\]
Then we can compute $S_k(g)$ as follows: 
\begin{align*}
S_{-2}(g)
&=S_{-1}(g)= 0, \ S_0(g) = \ghat(0)^2 \\
S_1(g)
&= \ghat(0) \{\cI(T^2 \ghat - T^2(\ghat \cos s))\} = \ghat(0) \cI(T\ghat)\\ 
S_2(g)
&= \frac{1}{2}\ghat(0) \cI(T\ghat) 
+ \frac{1}{16} \{\ghat''(0)^2 -(\ghat \cos s)''(0)^2
-(\gcheck \sin s)''(0)^2\} \\
&\quad - \frac{1}{16} 
\ghat(0) \{\ghat^{(4)}(0) - (\ghat \cos s)^{(4)}(0)\} \\
&= \frac{1}{2}\ghat(0) \cI(T\ghat) - 
\frac{1}{4} \Big( \ghat(0) \ghat''(0) + \gcheck'(0)^2 \Big) \\
S_3(g) 
&=S_2(g) 
+ \frac{1}{8} \left\{\ghat''(0) \cI(T\ghat) 
+ \ghat(0) \cI(T^2 \ghat \cos s) -2 \gcheck'(0) \cI(T^2\gcheck
 \sin s)\right\} - \frac{1}{4} \ghat(0)\cI(T^2 \ghat). 
\end{align*}
When $g(0) >0$, 
\begin{equation}
\cS_r(g) = g(0)^2 + g(0) \cI(T\ghat) y 
+ \left\{\frac{1}{2} g(0) \cI(T\ghat) - 
\frac{1}{4} \Big( g(0) g''(0) + g'(0)^2 \Big)\right\} y^2 + O(y^3). 
\label{eq:Srnonzero}
\end{equation}
When $g(0) =0$, $g'(0)=0$, we have 
\begin{equation}
\cS_r(g) = \frac{1}{8} g''(0)\cI(T\ghat) y^3 + O(y^4).  
\label{eq:Srzero}
\end{equation}

\subsection{The case $f_{\phi}(0)>0$}
Suppose the spectral density at $\phi$ is 
strictly positive, i.e., $f_{\phi}(0) > 0$. 
From \eqref{eq:1-intensity}, \eqref{eq:Prh}, 
and \eqref{eq:Srnonzero}, we see that 
\begin{align*}
\rho_1(re^{i\phi})
&= \frac{1}{\pi(1-r^2)^2} 
\frac{\cS_r(f_{\phi})}{\cP_r(\hat{f}_{\phi})^2} \\
&= \frac{1}{\pi(1-r^2)^2} 
\frac{f_{\phi}(0)^2 + f_{\phi}(0) \cI(T\fhat_{\phi}) y +
 b_{\phi} y^2
 +O(y^3)}{f_{\phi}(0)^2 + f_{\phi}(0) \cI(T
 \fhat_{\phi}) y + c_{\phi} y^2 + O(y^3)} \\
&=\frac{1}{\pi(1-r^2)^2} 
\left\{1 + \frac{b_{\phi}-c_{\phi}}{f_{\phi}(0)^2} y^2 +
 O(y^3)\right\}  \\   
&=\frac{1}{\pi(1-r^2)^2} 
\left\{1 - \frac{1}{4f_{\phi}(0)^2} \{f_{\phi}'(0)^2 +
 \{\cI(T \fhat_{\phi})\}^2 \}y^2 + O(y^3)\right\} .   
\end{align*}
We used the expansion formula 
\[
 \frac{p+au+bu^2+O(u^3)}{p+au+cu^2+O(u^3)}
= 1 + \frac{b-c}{p} u^2 + O(u^3) \quad \text{ as $u \to 0$}. 
\]

\subsection{The case $f_{\phi}(0)=0$}
First, note that when $f_{\phi}(0)=0$, we have $f'_{\phi}(0)=0$
since $f_{\phi}(s) \ge 0$. 
We have the following asymptotics from Propositions~\ref{prop:asymptotics1} and \ref{prop:asymptotics2}
\begin{align*}
\cP_r(\fphihat)
&=\frac{\cI(T\fphihat)}{2} y 
+\frac{1}{4} \left( \cI(T\fphihat) - \frac{\fphi''(0)}{2}\right) y^2 
+ O(y^3),  
\end{align*}
and thus 
\begin{equation}
\cP_r(\fphihat)^2
=\frac{1}{4}\cI(T\fphihat)^2 y^2 
+ \frac{1}{4} \cI(T\fphihat)
\left( \cI(T\fphihat) - \frac{\fphi''(0)}{2}\right) y^3+
 O(y^4).
\label{eq:Prsquare}
\end{equation}
On the other hand, we see that 
\begin{equation}
 \cS_r(\fphi) 
= \frac{1}{8} \fphi''(0) \cI(T\fphihat)y^3 + O(y^4). 
\label{eq:Sr2}
\end{equation}
Therefore, from \eqref{eq:Prsquare} and \eqref{eq:Sr2}, we
obtain 
\[
\frac{\cS_r(\fphi)}{\cP_r(\fphi)^2}
= \frac{\fphi''(0)}{2\cI(T\fphihat)}y + O(y^2). 
\]
This implies Theorem~\ref{thm:main}(ii). 

\subsection{The case $\fphi(0)=\fphi''(0)=0$}

When $\fphi(0)=\fphi''(0)=0$ (and thus $\fphi'(0)=0$), 
from Proposition~\ref{prop:asymptotics1}, 
we have 
\begin{equation*}
\cP_r(\fphihat) 
=\frac{1}{2} \cI(T\fphihat)y
+\frac{1}{4} \cI(T\fphihat) y^2 + O(y^3), 
\end{equation*}
and hence 
\begin{equation}
\cP_r(\fphihat)^2
= \frac{1}{4} \cI(T\fphihat)^2 y^2 + O(y^3).
\label{eq:Prsquare2}
\end{equation}
From Proposition~\ref{prop:asymptotics2}, we have  
\[
 \cQ_r(\fphihat) = \frac{1}{4}\cI(T^2 \fphihat) y^2 +
 O(y^3), 
\]
and thus   
\begin{align}
 \cS_r(\fphi) 
&= \cQ_r(\fphihat)^2 - \cQ_r(\fphihat \cos s)^2  -
 \cQ_r(\fphicheck \sin s)^2 \nonumber \\
&= \frac{1}{16}
\left\{\cI(T^2 \fphihat)^2 - \cI(T^2 \fphihat \cos s)^2 
- \cI(T^2 \fphicheck \sin s)^2 \right\} y^4 + O(y^5)
\label{eq:Sr3}
\end{align}
Therefore, from \eqref{eq:Prsquare2} and \eqref{eq:Sr3}, we
obtain 
\begin{align*}
\rho_1(re^{i\phi})
&=\frac{1}{\pi(1-r^2)^2} \frac{\cS_r(\fphi)}{\cP_r(\fphi)^2}\\
&= \frac{1}{\pi} 
\frac{\cI\Big( T^2 \fphihat\Big)^2 - \cI\Big(T^2(\fphihat \cos s)\Big)^2 -
 \cI\Big( T^2(\fphicheck \sin s) \Big)^2} 
{4 \cI(T\fphihat)^2} + O(y). 
\end{align*}

\begin{rem}
The numerator of the first term above can be expressed as 
\[
 \frac{1}{4} \int_{-\pi}^{\pi}\int_{-\pi}^{\pi} 
\frac{G_{\phi}(s,t) + G_{\phi}(s,-t)}{(1-\cos s)^2 (1-\cos
 t)^2} dsdt, 
\]
where 
$G_{\phi}(s,t) := \{f_{\phi}(s)f_{\phi}(t)+f_{\phi}(-s)f_{\phi}(-t)\}\{1-\cos(s-t)\}$. 
\end{rem}

\section{Analytic continuation}\label{sec:analytic_continuation} 

First we give a proof of the following basic fact. 
\begin{lem}\label{lem:normal_sequence} 
Let $\{\zeta_n\}_{n=0}^{\infty}$ be a sequence of standard complex
normal random variables that are not necessarily independent. Then, 
\begin{equation}
 \lim_{n \to \infty} |\zeta_n|^{1/n} = 1 \text{ a.s.} 
\label{eq:limzetan} 
\end{equation}
\end{lem}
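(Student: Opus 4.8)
The plan is to prove the two-sided statement $\lim_{n\to\infty}|\zeta_n|^{1/n}=1$ a.s.\ by establishing the matching upper and lower bounds, i.e.\ $\limsup_n |\zeta_n|^{1/n}\le 1$ and $\liminf_n |\zeta_n|^{1/n}\ge 1$ almost surely. Both directions rest on controlling the probability that $|\zeta_n|$ is atypically large or atypically small, and then invoking Borel--Cantelli. The crucial point is that the $\zeta_n$ need \emph{not} be independent, so I cannot use the converse Borel--Cantelli lemma; instead each half uses only the convergent ($\sum \P<\infty$) direction, which requires no independence. The only inputs I need are the marginal tail estimates for a single standard complex normal variable, since each $|\zeta_n|$ has the same Rayleigh-type distribution with density $2x\,e^{-x^2}$ on $[0,\infty)$.

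For the upper bound, fix $\varepsilon>0$ and consider the events $A_n=\{|\zeta_n|>(1+\varepsilon)^n\}$. Using the Gaussian tail, $\P(A_n)=\P(|\zeta_1|>(1+\varepsilon)^n)=e^{-(1+\varepsilon)^{2n}}$, which is summable in $n$ (indeed super-exponentially small). By Borel--Cantelli, almost surely only finitely many $A_n$ occur, so $|\zeta_n|\le(1+\varepsilon)^n$ eventually, giving $\limsup_n|\zeta_n|^{1/n}\le 1+\varepsilon$. For the lower bound, consider $B_n=\{|\zeta_n|<(1-\varepsilon)^n\}$; here I use the small-ball estimate $\P(|\zeta_1|<\delta)=1-e^{-\delta^2}\le \delta^2$ for $\delta=(1-\varepsilon)^n$, so $\P(B_n)\le(1-\varepsilon)^{2n}$, which is again summable (a geometric series). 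Borel--Cantelli then gives that almost surely $|\zeta_n|\ge(1-\varepsilon)^n$ eventually, whence $\liminf_n|\zeta_n|^{1/n}\ge 1-\varepsilon$.

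Finally I would combine the two bounds: intersecting the almost-sure events over a sequence $\varepsilon=1/m$, $m\ge 1$ (a countable intersection of full-measure events is still full measure), yields $1\le\liminf_n|\zeta_n|^{1/n}\le\limsup_n|\zeta_n|^{1/n}\le 1$ almost surely, which is exactly \eqref{eq:limzetan}.

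I do not expect a genuine obstacle here, since the estimates are elementary; the one subtlety worth flagging explicitly is precisely that only the \textbf{first} Borel--Cantelli lemma is invoked, so the argument is insensitive to any dependence structure among the $\zeta_n$ and uses only their identical marginal law. The mild care needed is just to record the one-variable tail and small-ball bounds for the standard complex normal (equivalently, that $|\zeta_1|^2$ is exponentially distributed with mean $1$), from which both summability claims follow immediately.
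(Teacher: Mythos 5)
Your proof is correct and follows essentially the same route as the paper: both rely solely on the first (convergence) half of Borel--Cantelli, which needs no independence, combined with the marginal tail and small-ball estimates $\P(|\zeta|\ge r)=e^{-r^2}$ and $\P(|\zeta|<\delta)\le\delta^2$. The only cosmetic difference is the choice of thresholds --- you use geometric cutoffs $(1\pm\varepsilon)^n$ with a final countable intersection over $\varepsilon=1/m$, whereas the paper uses $\sqrt{2\log n}$ and $1/n$, which yield the limit directly since $(2\log n)^{1/2n}\to 1$ and $n^{-1/n}\to 1$.
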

\begin{proof}
Since $\P(|\zeta| \ge r) = e^{-r^2}$ for $\zeta \sim N_{\C}(0,1)$,  
we have 
\[
 \sum_{n =1}^{\infty} \P(|\zeta_n| \ge \sqrt{2 \log n}) \le 
 \sum_{n =1}^{\infty} \frac{1}{n^2} < \infty 
\]
and 
\[
 \sum_{n =1}^{\infty} \P(|\zeta_n| < \frac{1}{n}) 
= \sum_{n =1}^{\infty} (1 - e^{-\frac{1}{n^2}}) 
\le  \sum_{n =1}^{\infty} \frac{1}{n^2} < \infty, 
\]
which imply 
\[
\P\Big(\frac{1}{n} \le |\zeta_n| \le \sqrt{2\log n} \text{ f.e.}
 \Big) =1. 
\]
Therefore, we obtain the assertion.  
\end{proof}

Suppose $h(z)$ is a power series of convergence radius $\rho \in (0,\infty)$. We recall that a point $b$ on $|z|=\rho$ is singular (or a singular point) if there does not exist any analytic continuation of $h(z)$ across an arc of $\{|z|=\rho\}$ containing $b$; otherwise $b$ is called regular (or a regular point). 
A subset of an arc $|z|=\rho$ is regular if it consists of regular points. 
If there exist $0<r<1$ and $0 < \eta < \pi$ such that 
\begin{equation}
\rho(r):= \left(\limsup_{k \to \infty} \left|\frac{1}{k!} X_{\Xi}^{(k)}(r)
\right|^{1/k}\right)^{-1} > |e^{i\eta} -r| = (1 - 2r \cos
\eta +r^2)^{1/2}, 
\label{eq:ac_event} 
\end{equation}
then the arc $I_{\eta} = \{e^{is} : |s| \le \eta \} \subset \partial
\D$ is regular (cf. \cite[p.39]{Kahane85}) 
since $\rho(r)$ is the radius of convergence centered at $r$
and the disk with radius $\rho(r)$ centered at $r$ contains
the arc $I_{\eta}$. 
Using this fact, we provide a proof of Theorem~\ref{thm:analytic_continuation}. 

\begin{proof}[Proof of Theorem~\ref{thm:analytic_continuation}]
Since the open set $(\supp F)^c$ can be expressed as at most 
 a countable union of open arcs, 
we regard each open arc as $I_{\delta}^{\circ} := \{e^{is} :
 s\in (-\delta, \delta)\}$ for some $\delta>0$ by applying an
 appropriate rotation if needed. 
We check \eqref{eq:ac_event} to show the open arc 
$I_{\delta}^{\circ}$ is regular.  
From the spectral represenation
 \eqref{eq:spectral_representation},  
we see that 
\[
\alpha_k = \alpha_k(z) := \frac{1}{k!} X_{\Xi}^{(k)}(z) = \int_{-\pi}^{\pi}
 \frac{e^{-ikt}}{(1-ze^{-it})^{k+1}} dZ(t). 
\]
Therefore, 
for $z=r \in (0,1)$, 
\begin{align*}
\var(\alpha_k) 
&=\E\left[\left|\frac{1}{k!} X_{\Xi}^{(k)}(z) \right|^2\right] \\
&= \int_{-\pi}^{\pi}
 \frac{1}{|1-re^{-it}|^{2(k+1)}} F(dt) \\
&= \int_{-\pi}^{\pi} \frac{1}{(1-2r \cos t + r^2)^{k+1}}
 F(dt). 
\end{align*}
Since $\{\alpha_k\}$ is a sequence of centered Gaussian
 random variables and 
$\alpha_k = \var(\alpha_k)^{1/2} \zeta_k$ with $\zeta_k :=
 \alpha_k/\var(\alpha_k)^{1/2} \sim N_{\C}(0,1)$, 
by Lemma~\ref{lem:normal_sequence}, we have 
\[
\rho(r) = \left(\limsup_{k \to \infty}
 |\alpha_k|^{1/k}\right)^{-1} 
= \limsup_{k \to \infty} \var(\alpha_k)^{-1/2k} \quad \as 
\]

Since the function $1-2r \cos t + r^2$ is increasing in $t
 \in (0,\pi)$ for $r \in (0,1)$, and $\supp F \subset
 (I_{\delta}^{\circ})^c$, we have 
\begin{align*}
\rho(r) 
&= \limsup_{k \to \infty} 
\left(\int_{-\pi}^{\pi} \frac{1}{(1-2r \cos t + r^2)^{k+1}}
 F(dt) \right)^{-1/2k}\\
&\ge (1-2r \cos \delta + r^2)^{1/2}. 
\end{align*}
From \eqref{eq:ac_event}, the arc $I_{\eta}$ is regular for
 any $\eta \in (0,\delta)$, and thus $I_{\delta}^{\circ}$ is 
 regular. 

On the other hand, when $0 \in \supp F$, for every $\epsilon \in
 (0,\pi)$, we see that 
\begin{align*}
 \rho(r) 
&\le \limsup_{k \to \infty} 
\left(\int_{-\epsilon}^{\epsilon} \frac{1}{(1-2r \cos t + r^2)^{k+1}}
 F(dt) \right)^{-1/2k} \\
&\le \limsup_{k \to \infty} 
\left(F([-\epsilon, \epsilon])\cdot (1-2r \cos \epsilon +
 r^2)^{-(k+1)} \right)^{-1/2k} \\
&= (1-2r \cos \epsilon + r^2)^{1/2}. 
\end{align*}
Since $\epsilon>0$ is arbitrary, $\rho(r) \le 1-r$ for every
 $r \in (0,1)$, which implies that $X_{\Xi}(z)$ cannot be
analytically extended across $z=1$, i.e., $z=1$ is a singular point. 
Therefore, we can conclude that $(\supp F)^c$ is the regular
 set of $X_{\Xi}$. 
\end{proof}

Although we provided a direct proof of 
the fact that the singularity of $X_{\Xi}$ coincides with
$\supp F$, 
the spectrum of $\Xi$, we here present an alternative proof 
which was essentially given in \cite{ANS17} using the theory of entire
functions (cf. \cite{Levin80, Levin96}). 

For a set $A \subset \C$, we denote the closed convex hull of $A$ by
$A^{\conv}$ and the reflection of $A$ in the real axis by
$A^{\refl} := \{\zbar : z \in A\}$. 

We consider an entire function of exponential
type $\tau$ of the form 
\[
f(z) = \sum_{n=0}^{\infty} a_n\frac{z^n}{n!}, 
\]
where type $\tau$ implies that 
\begin{equation}
 \limsup_{n\to \infty} |a_n|^{1/n} = \tau. 
\label{eq:type-sigma}
\end{equation}
One can associate such entire function $f(z)$ with its Borel transform 
\[
 \phi_f(w) = \sum_{n=0}^{\infty} \frac{a_n}{w^{n+1}}. 
\]
From \eqref{eq:type-sigma}, $\phi_f(w)$ is holomorphic in
the domain $D^{\tau} = \{w \in \C :|w| > \tau\}$. Now we denote
the singularity of $\phi_f$ by $S^{\phi_f} \subset \partial
D^{\tau} = \{w \in \C : |w|=\tau\}$. The function $z^{-1}
\phi_f(z^{-1}) = \sum_{n=0}^{\infty} a_n z^n$ is holomorphic
in $\D_{\tau}$ and its singularity coincides with $S^{\phi_f}$. 
The closed convex hull $(S^{\phi_f})^{\conv}$ is called the
\textit{conjugate diagram} of the function $f(z)$. 
We define the \textit{indicator function} of $f(z)$ by 
\[
 h^f(\theta) := \limsup_{n \to \infty}
 \frac{\log|f(re^{i\theta})|}{r} \quad \text{for $\theta \in [-\pi, \pi]$}. 
\]
It is known that $h^f(\theta)$ is the supporting function of
a bounded convex set, which is called the \textit{indicator
diagram} of $f(z)$ and denote it by $I^f$ \cite[Chapter I, Section 19]{Levin80}.

Now we recall Poly\'{a}'s theorem \cite[Theorem 33, Chapter
I]{Levin80}. 
\begin{thm}
 The conjugate diagram of an eitire function of exponential
 type is the reflection in the real axis of its indicator
 diagram, i.e., $(S^{\phi_f})^{\conv} = (I^f)^{\refl}$.  
\end{thm}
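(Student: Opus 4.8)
The plan is to establish the identity through the classical Borel--Laplace duality between $f$ and its Borel transform $\phi_f$, proving the two inclusions $(S^{\phi_f})^{\conv} \subseteq (I^f)^{\refl}$ and $(I^f)^{\refl} \subseteq (S^{\phi_f})^{\conv}$ separately. The central analytic tool is the pair of integral representations
\[
\phi_f(w) = \int_0^{\infty e^{i\theta}} f(t) e^{-wt}\,dt, \qquad
f(z) = \frac{1}{2\pi i} \oint_{\Gamma} \phi_f(w) e^{zw}\,dw,
\]
the first a Laplace transform of $f$ along the ray $\arg t = \theta$, the second an inverse Borel transform with $\Gamma$ a positively oriented contour enclosing the singularities $S^{\phi_f}$. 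Throughout I would write $H_K(\theta) := \sup_{w \in K} \re(w e^{-i\theta})$ for the support function of a compact convex set $K$, so that by definition of the indicator diagram $h^f = H_{I^f}$, and I would record the elementary reflection identity $H_{K^{\refl}}(\theta) = H_K(-\theta)$.

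For the first inclusion I would show that the Laplace integral above converges and represents $\phi_f$ on the half-plane $\{w : \re(w e^{i\theta}) > h^f(\theta)\}$. Indeed, substituting $t = s e^{i\theta}$ and using the growth bound $|f(s e^{i\theta})| \le e^{(h^f(\theta) + \varepsilon)s}$ valid for large $s$, the integrand is dominated by $e^{(h^f(\theta) + \varepsilon - \re(w e^{i\theta}))s}$, which is integrable precisely when $\re(w e^{i\theta}) > h^f(\theta)$. Since term-by-term integration recovers $\sum_n a_n w^{-n-1}$ for $|w|$ large, this provides a genuine holomorphic continuation of $\phi_f$ to each such half-plane, and hence to their union. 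The complement of this union is the intersection of half-planes $K := \{w : \re(w e^{i\theta}) \le h^f(\theta) \ \forall \theta\}$, a closed convex set whose support function is $H_K(\theta) = h^f(-\theta)$. By the reflection identity this says $K = (I^f)^{\refl}$, and since $\phi_f$ is holomorphic off $K$ we get $S^{\phi_f} \subseteq K$, whence $(S^{\phi_f})^{\conv} \subseteq (I^f)^{\refl}$.

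For the reverse inclusion I would start from the fact that $\phi_f$ is holomorphic on the complement of $K' := (S^{\phi_f})^{\conv}$ and use the inverse Borel representation with $\Gamma$ chosen within distance $\varepsilon$ of $K'$. Verifying this representation amounts to the residue computation $\frac{1}{2\pi i}\oint_{\Gamma} e^{zw} w^{-n-1}\,dw = z^n/n!$ together with Cauchy's theorem to shrink $\Gamma$ onto $K'$. Bounding $|e^{zw}| = e^{\re(zw)} \le e^{r (H_{K'}(-\theta) + \varepsilon')}$ for $z = r e^{i\theta}$ and $w \in \Gamma$, and using that $\phi_f$ is bounded on the fixed contour, yields $|f(r e^{i\theta})| \le C e^{r(H_{K'}(-\theta) + \varepsilon')}$ and therefore $h^f(\theta) \le H_{K'}(-\theta) = H_{(K')^{\refl}}(\theta)$. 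As this holds for every $\theta$, support-function domination gives $I^f \subseteq (K')^{\refl}$, i.e. $(I^f)^{\refl} \subseteq K'$. Combining the two inclusions gives $(S^{\phi_f})^{\conv} = (I^f)^{\refl}$.

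I expect the main obstacle to lie in the first inclusion, specifically in justifying that the ray-wise growth bound $\log|f(s e^{i\theta})| \le (h^f(\theta) + \varepsilon)s$ holds uniformly for all large $s$, so that the Laplace integral truly converges and defines an analytic continuation rather than merely an asymptotic majorant. The indicator is defined as a $\limsup$, so along exceptional directions one must invoke the regularity of $h^f$ (its continuity and the fact that it is a genuine support function of a bounded convex set) to upgrade the pointwise $\limsup$ to the uniform bound needed; the support-function bookkeeping, by contrast, is routine once the sign convention is fixed.
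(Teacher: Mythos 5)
The paper itself gives no proof of this theorem; it is quoted from Levin \cite[Theorem 33, Chapter I]{Levin80}, so your attempt can only be measured against the classical argument, which is exactly what you reproduce: the two-sided Borel--Laplace duality. Your first inclusion is sound, and in fact the obstacle you flag there is not one: each Laplace integral lives on a single fixed ray $\arg t = \theta$, so the bound $|f(se^{i\theta})| \le e^{(h^f(\theta)+\varepsilon)s}$ for all large $s$ is immediate from the definition of $\limsup$; no uniformity in $\theta$ is needed for convergence on that ray's half-plane. What does require an argument there (and you pass over it with ``and hence to their union'') is that the continuations obtained on the various half-planes $\{\re(we^{i\theta}) > h^f(\theta)\}$ patch into a single-valued function: one checks that any two such half-planes, if they meet, meet in a connected unbounded set intersecting $\{|w|>\tau\}$, where both integrals agree with the defining series. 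That part is routine.

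The genuine gap is the opening ``fact'' of your second inclusion: that $\phi_f$ is holomorphic on the complement of $K' = (S^{\phi_f})^{\conv}$. With $S^{\phi_f}$ defined, as in the paper, as the set of singular points of the Borel series on its circle of convergence $\{|w|=\tau\}$, this is false in general, and no contour $\Gamma$ hugging $K'$ exists. Concretely, take $f(z) = e^{z} + e^{-z} + e^{iz/2}$, of type $\tau = 1$: then $\phi_f(w) = (w-1)^{-1} + (w+1)^{-1} + (w-i/2)^{-1}$, so $S^{\phi_f} = \{1,-1\}$ and $K' = [-1,1]$, yet $\phi_f$ has a pole at $i/2 \notin K'$; the contour cannot be shrunk onto $K'$, and indeed the identity as literally stated fails for this example, since $(I^f)^{\refl} = \conv\{1,-1,i/2\}$ is a triangle strictly containing $[-1,1]$. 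The statement (and your step) become correct under Levin's actual definition of the conjugate diagram: the smallest compact convex set outside which $\phi_f$ admits a single-valued holomorphic continuation, equivalently the closed convex hull of \emph{all} singularities of that maximal continuation, not only those on $\partial D^{\tau}$. With that definition, holomorphy off $K'$ holds essentially by definition (after the small check that the family of admissible convex sets is stable under intersection, so a smallest one exists), your contour estimate then gives $(I^f)^{\refl} \subseteq K'$, and your first inclusion---which really shows $\phi_f$ is holomorphic off $(I^f)^{\refl}$, hence every singularity of the continuation lies there---gives the reverse. So the architecture of your proof is the right one; what is missing is the recognition that ``convex hull of the circle singularities'' and ``smallest convex set carrying all singularities'' are different objects in general, and the theorem is about the latter.
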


In \cite[Lemma~7.2.1]{ANS17}, 
for a wide-sense stationary process $\Xi=\{\xi_n\}_{n \in
\Z}$ with spectral measure $F(dt)$, they considered an entire
function of exponential type of the form 
\[
E_{\Xi}(z) = \sum_{n=0}^{\infty} \xi_n\frac{z^n}{n!}
\]
and showed that 
almost surely,
\begin{equation}
 h^{E_{\Xi}}(\theta) \le h_{F^*}(\theta), 
\quad \theta \in [-\pi, \pi], 
\label{eq:hEgehF} 
\end{equation}
where $h_{F^*}(\theta)$ is the supporting function of 
$((\supp F)^{\refl})^{\conv}$ and given by 
$h_{F^*}(\theta) = \max_{t \in \supp F} \cos (\theta + t)$. 
In other words, the indicator diagram $I^{E_{\Xi}}$
 of $E_{\Xi}$ is contained in the closed
convex hull of $\supp F$ reflected in the real axis, i.e., 
\begin{equation}
(S^{\phi_{E_{\Xi}}})^{\conv} = (I^{E_{\Xi}})^{\refl} \subset
(\supp F)^{\conv}, 
\label{eq:Sf=If} 
\end{equation}
and hence, we can conclude that $S^{\phi_{E_{\Xi}}} \subset
\supp F$ since both $S^{\phi_{E_{\Xi}}}$ and $\supp F$ are subsets of
$\partial D^{\tau}$. This implies that analytic continuation
can be performed at least across $(\supp F)^c$. 
Furthermore, when $\Xi$ is a stationary
Gaussian process, they showed \cite[Thereom 4]{ANS17} that almost surely 
\[
 h^{E_{\Xi}}(\theta) = h_{F^*}(\theta) \quad \theta \in [-\pi,
 \pi]. 
\]
Therefore, in the case where $\Xi$ is a stationary Gaussian process, we have 
$S^{\phi_{E_{\Xi}}} = \supp F$. 

Understanding the details of the analytic
continuation of $X_{\Xi}(z)$ (or equivalently
$\phi_{E_{\Xi}}(w)$) 
for a wide-sense stationary process (or an even Wiener sequence) $\Xi$ provides information on the growth behavior of
$E_{\Xi}(z)$ or $\Xi$ itself (see also \cite{BSW18, BBS21}). 


\vskip 5mm
\textbf{Acknowledgment. }
The author would like to thank Misha Sodin for pointing
out an alternative proof of Theorem~\ref{thm:analytic_continuation}. 
The author would also like to thank Naomi Feldheim for sharing
some relevant references. 
This work was supported by JSPS KAKENHI Grant Numbers
JP22H05105, JP23H01077 and JP23K25774, and also supported in
part JP21H04432 and JP24KK0060.  

\bibliographystyle{plain}
\bibliography{referenceGAF} 

\end{document}